\documentclass{article}

\usepackage[paperwidth=18cm, paperheight=25cm, left=2cm, right=2cm]{geometry}

\newcommand{\keyword}[1]{\textbf{Keywords:} #1}

\newtheorem{theorem}{Theorem}
\newtheorem{lemma}{Lemma}

\newtheorem{remark}{Remark}
\newtheorem{proposition}{Proposition}

\usepackage{graphicx} 
\usepackage{amsfonts} 
\usepackage{amsmath} 
\usepackage{amssymb} 
\title{Derivation and Well-Posedness Analysis of the Higher-Order Benjamin-Bona-Mahony Equation}

\author{Jie Zeng
}

\date{March 2025}

\begin{document}

\maketitle

\abstract{This paper studies the derivation and well-posedness of a class of high - order water wave equations, the fifth - order Benjamin - Bona - Mahony (BBM) equation. Low - order models have limitations in describing strong nonlinear and high - frequency dispersion effects. Thus, it is proposed to improve the modeling accuracy of water wave dynamics on long - time scales through high - order correction models. By making small - parameter corrections to the $abcd-$system, then performing approximate estimations, the fifth - order BBM equation is finally derived.For local well - posedness, the equation is first transformed into an equivalent integral equation form. With the help of multilinear estimates and the contraction mapping principle, it is proved that when $s\geq1$, for a given initial value $\eta_{0}\in H^{s}(\mathbb{R})$, the equation has a local solution $\eta \in C([0, T];H^{s})$, and the solution depends continuously on the initial value. Meanwhile, the maximum existence time of the solution and its growth restriction are given.For global well - posedness, when $s\geq2$, through energy estimates and local theory, combined with conservation laws, it is proved that the initial - value problem of the equation is globally well - posed in $H^{s}(\mathbb{R})$. When $1\leq s<2$, the initial value is decomposed into a rough small part and a smooth part, and evolution equations are established respectively. It is proved that the corresponding integral equation is locally well - posed in $H^{2}$ and the solution can be extended, thus concluding that the initial - value problem of the equation is globally well - posed in $H^{s}$.}

\keyword{Fifth - order BBM equation; Multilinear estimate; Local well - posedness; Global well - posedness}

\section{Introduction}
For the Cauchy problem of the third - order Benjamin - Bona - Mahony (BBM) equation
\begin{equation}
	\label{third - BBM - IVP1}
	\begin{cases}
		u_{t}+u_{x}-u_{txx}+uu_{x}=0,  \\
		u(0,  x)=u_{0}(x), 
	\end{cases}
\end{equation}
where $u:\mathbb{R}\times\mathbb{R}\to\mathbb{R}$ is a real - valued function and $u_0$ is the given initial value.

Bona and Tzvetkov \cite{BBM1} proved that the BBM equation is globally well - posed in $H^s(\mathbb{R})$ for $s\geq0$. The same result also holds for the periodic case \cite{BBM4, BBM5}. This well - posedness is optimal, since if $s < 0$, the equation (\ref{third - BBM - IVP1}) is ill - posed in $H^s(\mathbb{R})$. In fact, Bona and Tzvetkov \cite{BBM1} also proved that for $s < 0$, the solution map $u_0\mapsto u(t)$ of the equation (\ref{third - BBM - IVP1}) is not a $C^2$ map from $H^s(\mathbb{R})$ to $C([0, T];H^s(\mathbb{R}))$.

Later, Panthee \cite{BBM3} improved this result by showing that the solution map is not continuous from $H^s(\mathbb{R})$ to $\mathcal{D}'(\mathbb{R})$ for any fixed $t>0$ small enough.

Carvajal and Panthee \cite{BBM2} considered the following generalized BBM equation
\begin{equation}
	\label{third - BBM - IVP2}
	\begin{cases}
		u_{t}+u_{x}+Lu_{t}+(u^{k + 1})_{x}=0, \\
		u(0,  x)=u_{0}(x), 
	\end{cases}
\end{equation}
where $L$ is defined via the Fourier transform as $\widehat{Lu}(\xi)=|\xi|^{\alpha + 1}\widehat{\omega}(\xi)$, with $\alpha>0$.

Carvajal and Panthee \cite{BBM2} proved that the equation (\ref{third - BBM - IVP2}) is locally well - posed in $H^s(\mathbb{R})$ for some $s$ depending on $\alpha$ and $k$. Moreover, if $s<\max\{0,\frac{1}{2}-\frac{\alpha}{k}\}$, the equation (\ref{third - BBM - IVP2}) is ill - posed in $H^s(\mathbb{R})$, which means that the solution map is not $C^{k + 1}$ - differentiable from $H^s(\mathbb{R})$ to $C([0, T];H^s(\mathbb{R}))$ for any $T>0$. When $\alpha = 1$, some global well - posedness results for (\ref{third - BBM - IVP2}) with periodic boundary conditions were obtained \cite{BBM5}.

\section{Model Derivation}
From 2002 to 2004, Bona, Chen and Saut \cite{ref14, ref15} started from the free - surface Euler equations. Following the derivation context of the Boussinesq original equations, they considered the equivalent relationships among the physical quantities in the shallow - water region and combined the following expanded physical perturbation parameters
\[
\alpha=\frac{A}{h_{0}}\ll 1,   \qquad  \beta=\frac{h_{0}}{l^{2}}\ll 1,  \qquad S=\frac{\alpha}{\beta}=\frac{Al^{2}}{h_{0}^{3}}\approx 1, 
\]
where $A$ and $l$ represent the wave amplitude and wavelength in the classical sense, $h_{0}$ represents the undisturbed water depth, and $S$ represents the Stokes number. The first - order form of the $abcd -$system is
\begin{equation}\label{Orib1.1}
	\begin{cases}
		\partial_{t}\eta+\partial_{x}w+\partial_{x}(w\eta)+ a\partial_{x}^{3}w-b\partial_{x}^{2}\partial_{t}\eta=0, \\
		\partial_{t}w+\partial_{x}\eta+ w\partial_{x}w+ c\partial_{x}^{3}\eta-d\partial_{x}^{2}\partial_{t}w=0.
	\end{cases}
\end{equation}
Here, $\eta$ is proportional to the deviation of the free - surface position at $x$; $w = w(x,t)$ is proportional to the horizontal velocity at the point $(x,z_0,t)$ at the depth $z_0$ in the flow domain. The constants $a,b,c,d$ are not arbitrary, and they satisfy the relationships
\begin{equation}\label{abcd}
	\begin{cases}
		a=\frac12\Big(\theta^2-\frac13\Big)\lambda,  \qquad & b=\frac12\Big(\theta^2-\frac13\Big)(1-\lambda), \\
		c=\frac12(1-\theta^2)\mu,  \qquad \quad  &d=\frac12(1-\theta^2)(1-\mu), 
	\end{cases}
\end{equation} 
and thus $a + b + c + d=\frac13$. In the references \cite{ref14, ref15}, Bona et al. also gave the second - order form of the $abcd -$system as
\begin{equation} 
	\label{Orib1.2}
	\begin{cases}\begin{split}
			\partial_{t}\eta+&\partial_{x}w+a\partial_{x}^{3}w-b\partial_{x}^{2}\partial_{t}\eta+a_{1}\partial_{x}^{5}w+b_{1}\partial_{x}^{4}\partial_{t}\eta\\
			=&-\partial_{x}(\eta w)
			+b\partial_{x}^{3}(\eta w)+\left(c+d\right)\partial_{x}(\eta \partial_{x}^{2}w),  \\
			\partial_{t}w+&\partial_{x}\eta+c\partial_{x}^{3}\eta-d\partial_{x}^{2}\partial_{t}w+c_{1}\partial_{x}^{5}\eta+d_{1}\partial_{x}^{4}\partial_{t}w\\
			=&-w\partial_{x}w+(c+d)w\partial_{x}^{3}w-c\partial_{x}^{2}(w\partial_{x}w) \\
			&\quad-\partial_{x}(\eta\partial_{x}^{2}\eta) +(c+d - 1)\partial_{x}w\partial_{x}^{2}w, 
		\end{split}
	\end{cases}
\end{equation}
where the additional constants $a_1,b_1,c_1,d_1$ are
\begin{equation}\label{abcd1}
	\begin{cases} \begin{split}
			a_1&= -\frac14\Big(\theta^2-\frac13\Big)^2(1-\lambda)+\frac5{24}\Big(\theta^2-\frac15\Big)^2\lambda_1, \\
			b_1& = -\frac5{24}\Big(\theta^2-\frac15\Big)^2(1-\lambda_1), \\
			c_1& =\frac5{24}(1-\theta^2)\Big(\theta^2-\frac15\Big)(1-\mu_1), \\
			d_1&=-\frac14\big(1-\theta^2\big)^2\mu-\frac5{24}(1-\theta^2)\Big(\theta^2-\frac15\Big)\mu_1.
		\end{split}
	\end{cases}
\end{equation}
The parameter $\theta$ has a physical meaning. It is determined by the height above the bottom where the horizontal velocity is specified during initialization and follows its variation. In the previous equation derivation, $\theta = 1 - z_0$. Since the vertical variable is scaled by the undisturbed depth $h_0$ in these descriptions, $\theta\in [0,1]$. $\lambda,\mu,\lambda_1$ and $\mu_1$ are modeling parameters and can, in principle, take any real number. Therefore, the coefficients appearing in the high - order Boussinesq system form a restricted eight - parameter family.

Making small - parameter $\alpha,\beta$ corrections to the $abcd -$system \eqref{Orib1.1} and \eqref{Orib1.2}, the corrected first - order form is as follows
\begin{equation}\label{b1.1}
	\begin{cases}
		\partial_{t}\eta+\partial_{x}w+
		\alpha\partial_{x}(w\eta)+ \beta\left(a\partial_{x}^{3}w-b\partial_{x}^{2}\partial_{t}\eta\right)=0,  \\
		\partial_{t}w+\partial_{x}\eta+ \alpha w\partial_{x}w+ \beta\left(c\partial_{x}^{3}\eta-d\partial_{x}^{2}\partial_{t}w\right)=0.
	\end{cases}
\end{equation}
Meanwhile, the corrected second - order form is
\begin{equation} \label{b1.2}
	\begin{cases}\begin{split}
			\partial_{t}\eta+&\partial_{x}w+\beta\left(a\partial_{x}^{3}w-b\partial_{x}^{2}\partial_{t}\eta\right)+\beta^{2}\left(a_{1}\partial_{x}^{5}w+b_{1}\partial_{x}^{4}\partial_{t}\eta\right)\\
			+&\alpha\partial_{x}(\eta w)
			-\alpha\beta\left(b\partial_{x}^{3}(\eta w)+\left(c+d\right)\partial_{x}(\eta \partial_{x}^{2}w)\right)=0,   \\
			\partial_{t}w+&\partial_{x}\eta+\beta\left(c\partial_{x}^{3}\eta-d\partial_{x}^{2}\partial_{t}w\right)+\beta^{2}\left(c_{1}\partial_{x}^{5}\eta+d_{1}\partial_{x}^{4}\partial_{t}w\right)\\
			+&\alpha w\partial_{x}w-\alpha\beta\left(dw\partial_{x}^{3}w-\partial_{x}(\eta\partial_{x}^{2}\eta)+(d - 3c - 1)\partial_{x}w\partial_{x}^{2}w\right)=0.
		\end{split}
	\end{cases}
\end{equation}
Since the Stokes number $S=\frac{\alpha}{\beta}\approx 1$, the two small parameters $\alpha$ and $\beta$ can be approximately treated as equal, that is, $o(\alpha)=o(\beta), \, o(\alpha\beta)=o(\beta^2)$ and so on.

First, consider the \(o(1)\) terms in \eqref{b1.2} with initial values
\begin{equation}\label{o(1)}
	\begin{cases}
		\partial_{t}\eta+\partial_{x}w = 0,  \qquad
		\eta(x,  0) = f(x),  \\
		\partial_{t}w+\partial_{x}\eta = 0,   \qquad w(x,  0) = g(x),  
	\end{cases}
\end{equation}
where \(f(x)\) and \(g(x)\) are the initial surface perturbation and the horizontal velocity respectively. The solution of the equations is
\begin{equation}
	\begin{cases}
		\begin{split}
			\eta(x,  t)&=\frac{1}{2}\big[f(x + t)+f(x - t)\big]-\frac{1}{2}\big[g(x + t)-g(x - t)\big],  \\ 
			w(x,  t)&=\frac{1}{2}\big[g(x + t)+g(x - t)\big]-\frac{1}{2}\big[f(x + t)-f(x - t)\big].
		\end{split}
	\end{cases}
\end{equation}

Now, by applying the assumption that the wave propagates only to the right, at the \(o(1)\) - order terms, we have \(f = g\) and \(\eta(x,  t)=w(x,  t)=f(x - t)\). Thus, the one - way wave satisfies
\begin{equation}\label{wo(1)}
	w(x,  t)=\eta(x,  t)+o(\alpha,  \beta)=f(x - t)+o(\alpha,  \beta).
\end{equation}
Furthermore, combining the system \eqref{b1.2} with the right - propagation assumption, when \(\alpha,\beta\rightarrow0\), we have
\[
\partial_{t}\eta=-\partial_{x}\eta+o(\alpha,  \beta),   
\]
which implies
\begin{equation}\label{txo}
	\partial_{t}=-\partial_{x}+o(\alpha,  \beta).
\end{equation}
This will play a crucial role in the subsequent derivations.

For the next - order approximation of \(w\), it is natural to assume
\begin{equation} \label{lower}
	w=\eta+\alpha A+\beta B + o(\alpha^{2},\beta^{2},\alpha\beta),  
\end{equation}
where \(A = A(\eta,\cdots)\) and \(B = B(\partial_{x}^{2}\eta,\partial_{x}\partial_{t}\eta,\cdots)\) are simple polynomial functions of \(\eta\) and its first few partial derivatives. Substituting \eqref{lower} into the first - order system \eqref{b1.1}, we obtain the following system of equations
\begin{equation} \label{pair}
	\begin{cases}
		\begin{split}
			\partial_{t}\eta+\partial_{x}\eta &+\alpha \partial_{x}A+\alpha \partial_{x}(\eta^{2}) \\
			&+\beta \partial_{x}B+\beta\left(a\partial_{x}^{3}\eta - b\partial_{x}^{2}\partial_{t}\eta\right)= o(\alpha^{2},\beta^{2},\alpha\beta),   \\
			\partial_{t}\eta+\partial_{x}\eta&+\alpha \partial_{t}A+\alpha \eta\partial_{x}\eta  \\
			&+\beta \partial_{t}B+\beta\left(c \partial_{x}^{3}\eta - d \partial_{x}^{2}\partial_{t}\eta\right)= o(\alpha^{2},\beta^{2},\alpha\beta),  
		\end{split}
	\end{cases}
\end{equation}
To ensure the consistency of this pair of equations, we then have
\begin{align*}
	\partial_{x}A+\partial_{x}(\eta^{2})&=\partial_{t}A+\eta\partial_{x}\eta,  \\
	\partial_{x}B+(a\partial_{x}^{3}\eta - b\partial_{x}^{2}\partial_{t}\eta)&=\partial_{t}B+(c\partial_{x}^{3}\eta - d\partial_{x}^{2}\partial_{t}\eta),  
\end{align*}
By using the relation \eqref{txo}, that is, \(-\partial_{x}A=\partial_{t}A+o(\alpha,  \beta)\) and \(-\partial_{x}B=\partial_{t}B+o(\alpha,  \beta)\), we can determine
\begin{equation} \label{first - orderAB}
	A = -\frac{1}{4}\eta^{2},   \qquad  B=\frac{1}{2}\left((c - a)\partial_{x}^{2}\eta+(b - d)\partial_{x}\partial_{t}\eta\right).
\end{equation}

Assume
\begin{equation}\label{eq1.3}
	w = \eta+\alpha A+\beta B+\alpha\beta C+\beta^{2}D+\alpha^{2}E.
\end{equation}

Substitute (\ref{eq1.3}) into the system (\ref{b1.2}) and neglect the terms of at least the third - order of the small parameters $\alpha$ and $\beta$. We obtain the following system of equations:
\begin{equation}\label{system - s2}
	\begin{cases}
		\begin{split}
			\partial_{t}\eta=&-\partial_{x}\eta-\alpha \partial_{x}A-\beta \partial_{x}B-\alpha\beta \partial_{x}C-\beta^{2} \partial_{x}D \\
			&-\alpha^{2} \partial_{x}E +b\beta\partial_{x}^{2}\partial_{t}\eta - b_{1}\beta^{2}\partial_{x}^{4}\partial_{t}\eta -a\beta \partial_{x}^{3}\eta\\
			&-a\alpha\beta \partial_{x}^{3}A - a\beta^{2}\partial_{x}^{3}B - a_{1}\beta^{2}\partial_{x}^{5}\eta + b\alpha\beta\partial_{x}^{3}(\eta^{2}) \\
			&-\partial_{x}(\alpha\eta^{2}+\alpha^{2}A\eta+\alpha\beta B\eta)-(a + b-\frac{1}{3})\alpha\beta\partial_{x}(\eta \partial_{x}^{2}\eta),  \\
			\partial_{t}\eta=&-\partial_{x}\eta -\alpha \partial_{t}A-\beta \partial_{t}B-\alpha\beta \partial_{t}C-\beta^{2} \partial_{t}D-\alpha^{2}\partial_{t}E \\
			&+d\beta \partial_{x}^{2}\partial_{t}\eta + d\alpha\beta \partial_{x}^{2}\partial_{t}A + d\beta^{2}\partial_{x}^{2}\partial_{t}B - d_{1}\beta^{2} \partial_{x}^{4}\partial_{t}\eta\\
			&-c\beta\partial_{x}^{3}\eta - c_{1}\beta^{2}\partial_{x}^{5}\eta-\alpha \eta\partial_{x}\eta-\alpha^{2}\partial_{x}(\eta A)-\alpha\beta\partial_{x}(\eta B)\\
			&-c\alpha\beta\partial_{x}^{2}(\eta\partial_{x}\eta)+(c + d)\alpha\beta \eta\partial_{x}^{3}\eta - \alpha\beta\partial_{x}(\eta\partial_{x}^{2}\eta)\\
			&+(c + d - 1)\alpha\beta\partial_{x}\eta\partial_{x}^{2}\eta.
		\end{split}
	\end{cases}
\end{equation}

Introduce an auxiliary parameter $\rho$ such that
\begin{equation}\label{B - 1}
	B=\frac{1}{2}(c - a+\rho)\partial_{x}^{2}\eta+\frac{1}{2}(b - d+\rho)\partial_{x}\partial_{t}\eta.
\end{equation}
In the first - order approximation, this is equivalent to the case of $\rho = 0$. However, at the second - order, $\rho$ can be chosen to endow the resulting one - way model with better properties. For instance, take $\rho = b + d-\frac{1}{6}$. Then, we obtain the following KdV - BBM equation:
\begin{equation} \label{kdvbbm - 1}
	\partial_{t}\eta+\partial_{x}\eta+\frac{3}{2}\alpha \eta \partial_{x}\eta+\tilde{\nu} \beta \partial_{x}^{3}\eta 
	-\left(\frac{1}{6}-\tilde{\nu}\right)\beta \partial_{x}^{2}\partial_{t}\eta = 0,  
\end{equation}
where $\tilde{\nu}=\frac{1}{2}(a + c+\rho)$. To maintain the consistency of the two equations in \eqref{system - s2} for the second - order terms of $\alpha$ and $\beta$, we use the following approximation:
\begin{equation} \label{kdvbbm - 2}
	\partial_{t}\eta =-\partial_{x}\eta-\frac{3}{2}\alpha \eta \partial_{x}\eta-\tilde{\nu} \beta \partial_{x}^{3}\eta+\left(\frac{1}{6}-\tilde{\nu}\right)\beta \partial_{x}^{2}\partial_{t}\eta+o(\alpha^{2},\beta^{2},\alpha\beta).  
\end{equation}
If we use the forms of $A$ and $B$ given by \eqref{first - orderAB} and \eqref{B - 1} and the approximation \eqref{kdvbbm - 2} in \eqref{system - s2} respectively, more terms involving the orders of $\alpha\beta$, $\beta^{2}$ and $\alpha^{2}$ will emerge. Taking this into account, by equating the terms of $\alpha\beta$ - order in (\ref{system - s2}), we can obtain:
\begin{equation}\label{C - 1}
	\begin{split}
		C=&\left(\frac{1}{8}(a + 4b+2c - d)+\frac{3}{16}(a + b - c - d)+\frac{3}{8}\rho\right)\partial_{x}^{2}(\eta^{2}) \\
		&+\frac{13}{24}\eta\partial_{x}^{2}\eta+\frac{11}{48}(\partial_{x}\eta)^{2}.
	\end{split}
\end{equation}
Similarly, by equating the terms containing $\beta^{2}$ in (\ref{system - s2}), we can get:
\begin{equation}\label{D - 1}
	\begin{split}
		D=&-\left(\frac{1}{2}(b_{1}-d_{1})+\frac{1}{4}(b - d+\rho)\left(a - d+\frac{1}{6}\right)+\frac{1}{4}d(c - a+\rho)\right)\partial_{x}^{3}\partial_{t}\eta\\
		&-\left(\frac{1}{2}(a_{1}-c_{1})+\frac{1}{4}(c - a+\rho)\left(a+\frac{1}{6}\right)-\frac{1}{12}\rho\right)\partial_{x}^{4}\eta.
	\end{split}
\end{equation}
Finally, by equating the terms containing $\alpha^{2}$ in (\ref{system - s2}), we can obtain:
\begin{equation}\label{E - 1}
	E=\frac{1}{8}\eta^{3}.
\end{equation}

Substitute the expressions of $A$, $B$, $C$, $D$ and $E$ into any one of the equations in (\ref{system - s2}), combine with the equation (\ref{kdvbbm - 2}), and considering that $\eta\partial_{x}^{3}\eta=\frac{1}{2}\partial_{x}^{3}(\eta^{2})-\frac{3}{2}\partial_{x}(\partial_{x}\eta\partial_{x}\eta)$, we can derive the following evolution equation:
\begin{equation}\label{eq1.5}
	\begin{split}
		\partial_{t}\eta &+\partial_{x}\eta-\gamma_{1}\beta\partial_{x}^{2}\partial_{t}\eta+\gamma_{2}\beta\partial_{x}^{3}\eta+\delta_{1}\beta^{2}\partial_{x}^{4}\partial_{t}\eta+\delta_{2}\beta^{2}\partial_{x}^{5}\eta\\
		&+\frac{3}{2}\alpha\eta\partial_{x}\eta
		+\alpha\beta\left(\gamma\partial_{x}^{3}(\eta^{2})
		-\frac{7}{48}\partial_{x}(\partial_{x}\eta\partial_{x}\eta)\right)-\frac{1}{8}\alpha^{2}\partial_{x}(\eta^{3}) = 0,  
	\end{split}
\end{equation}
where
\begin{equation}\label{gamas}
	\begin{cases}
		\gamma_{1}=\frac{1}{2}(b + d-\rho),  \\
		\gamma_{2}=\frac{1}{2}(a + c+\rho),  \\
		\delta_{1}=\frac{1}{4}\left[2(b_{1}+d_{1})-(b - d+\rho)\left(\frac{1}{6}-a - d\right)-d(c - a+\rho)\right],  \\
		\delta_{2}=\frac{1}{4}\left[2(a_{1}+c_{1})-(c - a+\rho)\left(\frac{1}{6}-a\right)+\frac{1}{3}\rho\right],  \\
		\gamma=\frac{1}{24}\left[5 - 9(b + d)+9\rho\right].
	\end{cases}
\end{equation}

For the subsequent analysis, the small parameters $\alpha$ and $\beta$ are independent. We transform them into non - dimensional but un - scaled variables (denoted by $\sim$), i.e., $\tilde{\eta}(\tilde{x},\tilde{t})=\alpha^{-1}\eta(\beta^{\frac{1}{2}}\tilde{x},\beta^{\frac{1}{2}}\tilde{t})$, and then remove the $\sim$. Thus, we obtain the fifth - order BBM - type equation:
\begin{equation}\label{eq1.6}
	\begin{split}
		\partial_{t}\eta &+\partial_{x}\eta-\gamma_{1}\partial_{x}^{2}\partial_{t}\eta+\gamma_{2}\partial_{x}^{3}\eta+\delta_{1}\partial_{x}^{4}\partial_{t}\eta+\delta_{2}\partial_{x}^{5}\eta \\
		&+\frac{3}{4}\partial_{x}(\eta^{2})+\gamma \partial_{x}^{3}(\eta^{2})-\frac{7}{48}\partial_{x}(\partial_{x}\eta\partial_{x}\eta)-\frac{1}{8}\partial_{x}(\eta^{3}) = 0.
	\end{split}
\end{equation}

\section{Local Well - Posedness}
In this section, we will study the local well - posedness of the Cauchy problem associated with the equation (\ref{eq1.6}), where the initial value is $\eta(x,0)=\eta_0(x)$ and $\eta_0(x)\in H^s(\mathbb{R})$. First, we transform the equation (\ref{eq1.6}) into an equivalent integral equation form. Taking the Fourier transform of the equation (\ref{eq1.6}) with respect to the spatial variable, we obtain
\begin{equation}\label{eq1.7}
	\begin{split}
		\widehat{\eta}_t + i\xi\widehat{\eta}&+\gamma_1\xi^2\widehat{\eta}_t - i\gamma_2\xi^3\widehat{\eta}+\delta_1\xi^4\widehat{\eta}_t+\delta_2i\xi^5\widehat{\eta}\\
		&+\frac{3}{4}i\xi\widehat{\eta^2}-\gamma i\xi^3\widehat{\eta^2}-\frac{1}{8}i\xi\widehat{\eta^3}-\frac{7}{48}i\xi\widehat{\eta_x^2}=0,  
	\end{split}
\end{equation}
and then
\begin{equation}\label{eq1.8}
	\begin{split}
		\xi(1 - \gamma_2\xi^2+&\delta_2\xi^4)\widehat{\eta}+\frac{1}{4}(3\xi - 4\gamma\xi^3)\widehat{\eta^2}-\frac{1}{8}\xi\widehat{\eta^3}\\
		&-\frac{7}{48}\xi\widehat{\eta_x^2}-\left(1+\gamma_1\xi^2+\delta_1\xi^4\right)i\widehat{\eta}_t = 0.
	\end{split}
\end{equation}

Since $\gamma_1$ and $\delta_1$ are taken as positive values, the fourth - order polynomial
\begin{equation}\label{def-phi}
	\varphi(\xi):=1+\gamma_1\xi^2+\delta_1\xi^4, 
\end{equation}
is always greater than zero. We define the Fourier multipliers $\phi(\partial_x)$, $\psi(\partial_x)$ and $\tau(\partial_x)$ as
\begin{equation}\label{phi-D}
	\widehat{\phi(\partial_x)f}(\xi):=\phi(\xi)\widehat{f}(\xi),  \quad \widehat{\psi(\partial_x)f}(\xi):=\psi(\xi)\widehat{f}(\xi),  \quad \widehat{\tau(\partial_x)f}(\xi):=\tau(\xi)\widehat{f}(\xi),  
\end{equation}
where
\begin{equation}\label{phi-D1}
	\phi(\xi)=\frac{\xi(1 - \gamma_2\xi^2+\delta_2\xi^4)}{\varphi(\xi)},  \quad \psi(\xi)=\frac{\xi}{\varphi(\xi)},  \quad  \tau(\xi)=\frac{3\xi - 4\gamma\xi^3}{4\varphi(\xi)}.
\end{equation}

Therefore, the Cauchy problem of the equation (\ref{eq1.6}) can be written in the following form
\begin{equation}\label{eq1.9}
	\begin{cases}
		i\eta_t=\phi(\partial_x)\eta+\tau(\partial_x)\eta^2-\frac{1}{8}\psi(\partial_x)\eta^3-\frac{7}{48}\psi(\partial_x)\eta_x^2, \\
		\eta(x,0)=\eta_0(x).
	\end{cases}
\end{equation}

First, we consider the linear initial - value problem
\begin{equation}\label{eq1.10}
	\begin{cases}
		i\eta_t=\phi(\partial_x)\eta, \\
		\eta(x,0)=\eta_0(x), 
	\end{cases}
\end{equation}
whose solution is $\eta(t)=S(t)\eta_0$, where $\widehat{S(t)\eta_0}=e^{-i\phi(\xi)t}\widehat{\eta_0}$.
Since $S(t)$ is a unitary operator on $H^s(\forall s\in\mathbb{R})$, for any $t > 0$, we have
\begin{equation}\label{eq1.11}
	\|S(t)\eta_0\|_{H^s}=\|\eta_0\|_{H^s}.
\end{equation}
By the Duhamel formula, the initial - value problem (\ref{eq1.9}) can be written in the equivalent integral equation form
\begin{equation}\label{eq1.12}
	\eta(x,t)=S(t)\eta_0 - i\int_0^tS(t - t')\left(\tau(\partial_x)\eta^2-\frac{1}{8}\psi(\partial_x)\eta^3-\frac{7}{48}\psi(\partial_x)\eta_x^2\right)(x,t')dt'.
\end{equation}

Next, we will obtain the local solution of the equation (\ref{eq1.12}) in the space $C([0,T];H^s)$ by the contraction mapping principle.

\subsection{Multilinear Estimates}

Here are the multilinear estimates which play a crucial role in the proof of local well-posedness results. First, we recall the following bilinear estimate from :

\begin{lemma}\label{BT1}
	For any \( s \ge 0 \), there exists a constant \( C = C_s \) such that
	\begin{equation}\label{bt}
		\|\omega(\partial_x) (uv)\|_{H^s} \le C\|u\|_{H^s}\|v\|_{H^s},
	\end{equation}
	where \( \omega(\partial_x) \) is the Fourier multiplier defined by
	\begin{equation}\label{btx0}
		\omega(\xi) = \frac{|\xi|}{1 + \xi^2}.
	\end{equation}
\end{lemma}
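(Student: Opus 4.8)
The plan is to exploit that $\omega$ is a smoothing Fourier multiplier of order exactly one. First I would record the elementary pointwise bound $\omega(\xi)=\frac{|\xi|}{1+\xi^2}\le (1+\xi^2)^{-1/2}=\langle\xi\rangle^{-1}$, which is just $|\xi|\le\langle\xi\rangle$. Consequently $\|\omega(\partial_x)h\|_{L^2}\le\|h\|_{H^{-1}}$ for every $h$, so the statement reduces to two sub-tasks: (i) the endpoint bilinear bound $\|uv\|_{H^{-1}(\mathbb R)}\le C\|u\|_{L^2}\|v\|_{L^2}$, equivalently $\|\omega(\partial_x)(uv)\|_{L^2}\le C\|u\|_{L^2}\|v\|_{L^2}$; and (ii) a reduction of the case $s>0$ to the case $s=0$.

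For (i) I would argue by duality: $\|uv\|_{H^{-1}}=\sup\{|\int_{\mathbb R}uv\,\bar\phi\,dx| : \phi\in H^1,\ \|\phi\|_{H^1}\le1\}$, and Hölder gives $|\int uv\,\bar\phi\,dx|\le\|u\|_{L^2}\|v\|_{L^2}\|\phi\|_{L^\infty}$. The one-dimensional Sobolev embedding $H^1(\mathbb R)\hookrightarrow L^\infty(\mathbb R)$ then yields $\|\phi\|_{L^\infty}\le C\|\phi\|_{H^1}$, which closes (i). For (ii) I may assume $\widehat u,\widehat v\ge0$ after replacing $u,v$ by the functions with Fourier transforms $|\widehat u|,|\widehat v|$; this only increases $|\widehat{uv}(\xi)|$ pointwise and leaves every $H^s$ norm unchanged. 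Using the elementary inequality $\langle\xi\rangle^s\le C_s(\langle\xi-\eta\rangle^s+\langle\eta\rangle^s)$, valid for all $\xi,\eta$ when $s\ge0$, inside the convolution $\widehat{uv}(\xi)=\int\widehat u(\xi-\eta)\widehat v(\eta)\,d\eta$, one bounds $\langle\xi\rangle^s\widehat{uv}(\xi)$ by $C_s$ times the sum of the (nonnegative) Fourier transforms of $(J^su)\,v$ and $u\,(J^sv)$, where $J^s=\langle\partial_x\rangle^s$. Applying (i) to each of these two products gives $\|\omega(\partial_x)(uv)\|_{H^s}\le C_s\big(\|u\|_{H^s}\|v\|_{L^2}+\|u\|_{L^2}\|v\|_{H^s}\big)$, and since $\|\cdot\|_{L^2}\le\|\cdot\|_{H^s}$ for $s\ge0$, the estimate follows.

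The main obstacle is step (i): one cannot obtain $\|\omega(\partial_x)(uv)\|_{L^2}\lesssim\|u\|_{L^2}\|v\|_{L^2}$ from Young's convolution inequality, since that route would force $\widehat u$ or $\widehat v$ into $L^1$ and hence require $s>1/2$. What makes the endpoint work is precisely that $\omega(\partial_x)$ gains exactly one derivative, and one derivative of smoothing is exactly what an $L^2\times L^2\to H^{-1}$ product estimate needs in one space dimension; this is a genuinely one-dimensional phenomenon, resting on $H^1(\mathbb R)\hookrightarrow L^\infty(\mathbb R)$. The remaining ingredients — the pointwise bound on $\omega$, the pointwise bound on $\langle\xi\rangle^s$, the symmetrizing reduction to nonnegative Fourier transforms, and the frequency splitting — are routine.
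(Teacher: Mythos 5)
Your proposal is correct and complete. Note, however, that the paper itself offers no proof of Lemma~1: it is ``recalled'' from the literature (the citation after ``from'' is in fact left dangling; the intended source is Bona--Tzvetkov's sharp well-posedness paper for the BBM equation), so there is no in-paper argument to compare against. Your two steps both check out: the endpoint bound $\|uv\|_{H^{-1}}\le C\|u\|_{L^2}\|v\|_{L^2}$ via duality and $H^1(\mathbb R)\hookrightarrow L^\infty(\mathbb R)$ is exactly the dual formulation of the Fourier-side argument used in the original source, namely $\omega(\xi)\le\langle\xi\rangle^{-1}$ with $\langle\xi\rangle^{-1}\in L^2(\mathbb R)$ and $\|\widehat u*\widehat v\|_{L^\infty}\le\|\widehat u\|_{L^2}\|\widehat v\|_{L^2}$ by Cauchy--Schwarz; that Fourier-side version is also the style this paper itself uses later (compare the chain of inequalities in the proof of Proposition~2, where $\|(1+|\xi|)^{-(3-s)}\|_{L^2}\|\widehat{(\eta_1\eta_2\eta_3)}\|_{L^\infty}$ appears). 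Your reduction of $s>0$ to $s=0$ via the symmetrization $\widehat u\mapsto|\widehat u|$ and the inequality $\langle\xi\rangle^s\le 2^s(\langle\xi-\eta\rangle^s+\langle\eta\rangle^s)$ is the standard Leibniz-type splitting and is valid for all $s\ge0$; applying the endpoint estimate to $(J^su)v$ and $u(J^sv)$ then closes the argument. The only thing I would add for the record is that you correctly identify where the one dimension enters: both $H^1(\mathbb R)\hookrightarrow L^\infty$ and $\langle\xi\rangle^{-1}\in L^2$ fail in higher dimensions, which is why the full one-derivative gain of $\omega(\partial_x)$ is needed and sufficient here.
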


\begin{proposition}\label{P}
	For any \( s \ge 0 \), there exists a constant \( C = C_s \) such that the inequalities
	\begin{equation}\label{bilin-1}
		\|\tau(\partial_x) (\eta_1 \eta_2)\|_{H^s} \le C \| \eta_1\|_{H^s}  \| \eta_2\|_{H^s}
	\end{equation}
	and
	\begin{equation}\label{xbilin-1}
		\|\partial_x\tau(\partial_x) (\eta_1 \eta_2)\|_{H^1} \le C \| \eta_1\|_{H^1}  \| \eta_2\|_{H^1}
	\end{equation}
	hold, where the operator \( \tau(\partial_x) \) is defined in (\ref{phi-D}, \ref{phi-D1}).
\end{proposition}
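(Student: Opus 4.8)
The plan is to derive both bounds from the Bona--Tzvetkov estimate of Lemma~\ref{BT1} together with two elementary symbol comparisons, all resting on the fact that $\varphi(\xi)=1+\gamma_1\xi^2+\delta_1\xi^4$ is a strictly positive even polynomial of degree exactly $4$ (this is precisely where the standing hypotheses $\gamma_1>0$, $\delta_1>0$ enter). For \eqref{bilin-1} the point is that $\tau(\xi)$ is pointwise dominated by $\omega(\xi)=|\xi|/(1+\xi^2)$; for \eqref{xbilin-1} the point is that the symbol $\xi\,\tau(\xi)$ of $\partial_x\tau(\partial_x)$ is merely \emph{bounded}, so this operator is of order zero and one finishes with the algebra property of $H^1(\mathbb{R})$.

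For \eqref{bilin-1} I would first prove that there is a constant $C_0>0$ with
\[ |\tau(\xi)|=\frac{|3\xi-4\gamma\xi^3|}{4\,\varphi(\xi)}\le\frac{|\xi|\,(3+4|\gamma|\xi^2)}{4\,(1+\gamma_1\xi^2+\delta_1\xi^4)}\le C_0\,\frac{|\xi|}{1+\xi^2}=C_0\,\omega(\xi),\qquad\xi\in\mathbb{R}. \]
The middle step is the triangle inequality; the last one reduces to the polynomial inequality $(3+4|\gamma|\xi^2)(1+\xi^2)\le 4C_0\,(1+\gamma_1\xi^2+\delta_1\xi^4)$, which holds coefficientwise for $C_0$ large, precisely because $\gamma_1,\delta_1>0$ make the right-hand side a degree-$4$ polynomial with strictly positive $\xi^2$- and $\xi^4$-coefficients matching the degree of the left-hand side. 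Given this, for every $s\ge0$,
\[ \|\tau(\partial_x)(\eta_1\eta_2)\|_{H^s}^2=\int_{\mathbb{R}}\langle\xi\rangle^{2s}\,|\tau(\xi)|^2\,|\widehat{\eta_1\eta_2}(\xi)|^2\,d\xi\le C_0^2\int_{\mathbb{R}}\langle\xi\rangle^{2s}\,\omega(\xi)^2\,|\widehat{\eta_1\eta_2}(\xi)|^2\,d\xi=C_0^2\,\|\omega(\partial_x)(\eta_1\eta_2)\|_{H^s}^2, \]
and Lemma~\ref{BT1} yields \eqref{bilin-1}.

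For \eqref{xbilin-1} I observe that the Fourier symbol of $\partial_x\tau(\partial_x)$ has modulus $|\xi\,\tau(\xi)|=\dfrac{|3\xi^2-4\gamma\xi^4|}{4\,\varphi(\xi)}$, a quotient whose numerator has degree at most $4$ and whose denominator is the strictly positive degree-$4$ polynomial $\varphi$; hence $M:=\sup_{\xi\in\mathbb{R}}|\xi\,\tau(\xi)|<\infty$. Since the multiplier operator with this bounded symbol commutes with multiplication by $\langle\xi\rangle$, it maps $H^1(\mathbb{R})$ into itself with operator norm $\le M$, so $\|\partial_x\tau(\partial_x)(\eta_1\eta_2)\|_{H^1}\le M\,\|\eta_1\eta_2\|_{H^1}$. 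Finally, $H^1(\mathbb{R})$ is a Banach algebra (via $H^1(\mathbb{R})\hookrightarrow L^\infty(\mathbb{R})$, which gives $\|\eta_1\eta_2\|_{H^1}\le C\,\|\eta_1\|_{H^1}\|\eta_2\|_{H^1}$), and combining the two inequalities proves \eqref{xbilin-1}.

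I do not expect a genuine obstacle here: the whole argument comes down to the two symbol estimates $|\tau|\lesssim\omega$ and $|\xi\tau|\lesssim 1$, each a routine rational-function comparison. The only points requiring a little care are the two asymptotic regimes, $\xi\to0$ (where both symbols vanish, so no spurious singularity appears) and $|\xi|\to\infty$ (where the degree count of numerator against denominator must be verified), and the observation that each of these comparisons relies on $\gamma_1,\delta_1>0$ so that $\varphi$ is positive and genuinely of degree $4$.
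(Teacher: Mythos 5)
Your proposal is correct and follows essentially the same route as the paper's proof: the first bound via the symbol comparison $|\tau(\xi)|\lesssim\omega(\xi)$ combined with Lemma~\ref{BT1}, and the second via the boundedness of $\xi\,\tau(\xi)$ together with the algebra property of $H^1(\mathbb{R})$. You merely supply the routine polynomial comparisons in more detail than the paper does.
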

\begin{proof}
	Since \( \delta_1 > 0 \), there exists a constant \( C > 0 \) such that \( \tau(\xi) \leq C \omega(\xi) \). Thus, inequality (\ref{bilin-1}) follows directly from Lemma \ref{BT1}.
	
	For inequality (\ref{xbilin-1}), by the definition of \( \tau(\partial_x) \), we have
	\begin{equation}\label{1xeq13}
		\|\partial_x\tau(\partial_x) (\eta_1 \eta_2)\|_{H^1} = \|\langle \xi \rangle \xi \tau(\xi) \widehat{(\eta_1 \eta_2)}(\xi)\|_{L^2}.
	\end{equation}
	Note that
	$$
	|\xi \tau(\xi)| = \left| \frac{3\xi^2 - 4\gamma\xi^4}{4(1 + \gamma_1\xi^2 + \delta_1\xi^4)} \right| \leq c.
	$$
	Since \( H^1 \) is an algebra, it follows that
	\begin{equation*}
		\|\partial_x\tau(\partial_x) (\eta_1 \eta_2)\|_{H^1} \leq c \|\langle \xi \rangle \widehat{(\eta_1 \eta_2)}\|_{L^2} = c \|\eta_1 \eta_2\|_{H^1} \lesssim \|\eta_1\|_{H^1}\|\eta_2\|_{H^1}.
	\end{equation*}
\end{proof}

\begin{proposition}\label{P1}
	For \( s \ge \frac{1}{6} \), there exists a constant \( C = C_s \) such that
	\begin{equation}\label{trilin-1}
		\|\psi(\partial_x) (\eta_1 \eta_2 \eta_3)\|_{H^s} \le C \| \eta_1\|_{H^s}  \| \eta_2\|_{H^s}\| \eta_3\|_{H^s}
	\end{equation}
	and
	\begin{equation}\label{xtrilin-1}
		\|\partial_x\psi(\partial_x) (\eta_1 \eta_2 \eta_3)\|_{H^1} \le C \| \eta_1\|_{H^1}  \| \eta_2\|_{H^1}\| \eta_3\|_{H^1}.
	\end{equation}
\end{proposition}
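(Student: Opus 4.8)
The plan is to exploit the strong smoothing encoded in $\psi$. Since $\delta_1>0$ (and $\gamma_1>0$), the polynomial $\varphi(\xi)=1+\gamma_1\xi^2+\delta_1\xi^4$ satisfies $\varphi(\xi)\ge c\langle\xi\rangle^{4}$ for some $c>0$: indeed $\varphi(\xi)\ge 1+\delta_1\xi^4\ge\min(1,\delta_1)(1+\xi^4)$, and $1+\xi^4\ge\tfrac12(1+\xi^2)^2$ because $(\xi^2-1)^2\ge0$. Hence $|\psi(\xi)|=|\xi|/\varphi(\xi)\le C\langle\xi\rangle^{-3}$ and $|\xi\,\psi(\xi)|\le C\langle\xi\rangle^{-2}$, so that $\psi(\partial_x)$ maps $H^\sigma(\mathbb R)$ boundedly into $H^{\sigma+3}(\mathbb R)$ and $\partial_x\psi(\partial_x)$ maps $H^\sigma(\mathbb R)$ boundedly into $H^{\sigma+2}(\mathbb R)$, for every $\sigma\in\mathbb R$. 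Both inequalities of the proposition will follow once the product $\eta_1\eta_2\eta_3$ is placed in an appropriate (possibly negative–index) Sobolev space and one multiplier bound is applied.

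For \eqref{xtrilin-1} this is immediate. Since $H^1(\mathbb R)$ is a Banach algebra, $\|\eta_1\eta_2\eta_3\|_{H^1}\lesssim\|\eta_1\|_{H^1}\|\eta_2\|_{H^1}\|\eta_3\|_{H^1}$, and applying $\partial_x\psi(\partial_x)\colon H^1\to H^3\hookrightarrow H^1$ finishes it; equivalently, one notes $|\langle\xi\rangle\,\xi\psi(\xi)|\le C\langle\xi\rangle^{-1}\le C$, whence $\|\partial_x\psi(\partial_x)(\eta_1\eta_2\eta_3)\|_{H^1}=\|\langle\xi\rangle\,\xi\psi(\xi)\,\widehat{\eta_1\eta_2\eta_3}\|_{L^2}\le C\|\eta_1\eta_2\eta_3\|_{L^2}$, and the $L^2$ norm is controlled by $\prod_i\|\eta_i\|_{H^1}$ using $H^1(\mathbb R)\hookrightarrow L^\infty(\mathbb R)$.

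For \eqref{trilin-1} I would first establish the trilinear product estimate $\|\eta_1\eta_2\eta_3\|_{H^{s-3}}\lesssim\prod_i\|\eta_i\|_{H^s}$ for all $s\ge\tfrac16$, and then conclude by $\psi(\partial_x)\colon H^{s-3}\to H^{s}$. To prove the product estimate I would distinguish two overlapping regimes. When $s>\tfrac12$, $H^s(\mathbb R)$ is an algebra, so $\eta_1\eta_2\eta_3\in H^s\hookrightarrow H^{s-3}$. When $\tfrac16\le s\le2$, I would use the sharp one–dimensional Sobolev embedding $H^s(\mathbb R)\hookrightarrow L^3(\mathbb R)$ (valid precisely because $s\ge\tfrac16$) together with Hölder's inequality to obtain $\eta_1\eta_2\eta_3\in L^1(\mathbb R)$ with $\|\eta_1\eta_2\eta_3\|_{L^1}\le\prod_i\|\eta_i\|_{L^3}\lesssim\prod_i\|\eta_i\|_{H^s}$, followed by the embedding $L^1(\mathbb R)\hookrightarrow H^{-1}(\mathbb R)$ (dual to $H^1\hookrightarrow L^\infty$), which lands the product in $H^{-1}\hookrightarrow H^{s-3}$. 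These two regimes cover all $s\ge\tfrac16$.

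The only genuinely delicate point is the endpoint $s=\tfrac16$. Trying to bound the product by iterating the bilinear Sobolev multiplication law $H^{a}(\mathbb R)\cdot H^{b}(\mathbb R)\hookrightarrow H^{a+b-1/2}(\mathbb R)$ would require the \emph{strict} inequality $s>\tfrac16$ (the first pairing already needs $2s\ge0$, the second needs $3s>\tfrac12$). It is therefore essential to route the \emph{trilinear} product directly through $L^1$ via the endpoint–valid embedding $H^{1/6}(\mathbb R)\hookrightarrow L^3(\mathbb R)$, rather than through two successive binary products. Once this is arranged, the rest of the argument is soft, relying only on the pointwise multiplier bounds from the first step and on standard Sobolev embeddings.
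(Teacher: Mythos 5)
Your proposal is correct and follows essentially the same route as the paper: both arguments rest on the three-derivative gain $|\psi(\xi)|\lesssim\langle\xi\rangle^{-3}$, the endpoint embedding $H^{1/6}(\mathbb{R})\hookrightarrow L^3(\mathbb{R})$ combined with H\"older to place $\eta_1\eta_2\eta_3$ in $L^1$ (hence $\widehat{\eta_1\eta_2\eta_3}\in L^\infty$), the algebra property of $H^s$ for $s>\tfrac12$ in the complementary regime, and the bound $|\langle\xi\rangle\xi\psi(\xi)|\lesssim 1$ for the second inequality. Your factoring of the low-regularity case through $L^1\hookrightarrow H^{-1}\hookrightarrow H^{s-3}$ is just a repackaging of the paper's direct Cauchy--Schwarz split $\|(1+|\xi|)^{s-3}\widehat{\eta_1\eta_2\eta_3}\|_{L^2}\le\|(1+|\xi|)^{s-3}\|_{L^2}\|\widehat{\eta_1\eta_2\eta_3}\|_{L^\infty}$ (which is why the paper gets the range $s<\tfrac52$ where you get $s\le 2$; both suffice).
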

\begin{proof}
	When \( \frac{1}{6} \le s < \frac{5}{2} \), we have
	\begin{equation}\label{x2}
		\Big|(1+|\xi|)^s \psi(\xi)\Big| = \left| \frac{(1+|\xi|)^s \xi}{1 + \gamma_1 \xi^2 + \delta_1\xi^4} \right| \le C \frac{1}{(1+|\xi|)^{3-s}}.
	\end{equation}
	This implies
	\begin{equation}\label{x11}
		\begin{split}
			\|\psi(\partial_x) (\eta_{1}\eta_{2}\eta_{3})\|_{H^s} 
			&= \|(1+|\xi|)^s \psi(\xi)\widehat{(\eta_{1}\eta_{2}\eta_{3})}(\xi)\|_{L^2} \\
			&\le C \left\|\frac{1}{(1+|\xi|)^{3-s}} \widehat{(\eta_{1}\eta_{2}\eta_{3})}\right\|_{L^2} \\
			&\leq C \left\|\frac{1}{(1+|\xi|)^{3-s}}\right\|_{L^2} \| \widehat{(\eta_{1}\eta_{2}\eta_{3})}\|_{L^{\infty}} \\
			&\leq C \|\eta_{1}\|_{L^3}\|\eta_{2}\|_{L^3}\|\eta_{3}\|_{L^3}.
		\end{split}
	\end{equation}
	By the Sobolev embedding \( H^{\frac{1}{6}} \hookrightarrow L^3 \) in one dimension,
	\begin{equation}\label{x3}
		\|\eta\|_{L^{3}} \le C \|\eta\|_{H^{\frac{1}{6}}},
	\end{equation}
	yielding
	$$
	\|\psi(\partial_x) (\eta_{1}\eta_{2}\eta_{3})\|_{H^s} \le C \| \eta_{1}\|_{H^s}\| \eta_{2}\|_{H^s}\| \eta_{3}\|_{H^s}.
	$$
	
	For \( s > \frac{1}{2} \), \( H^s \) becomes a Banach algebra. Since \( |\psi(\xi)| \leq C\omega(\xi) \), Lemma \ref{BT1} gives
	\begin{equation}
		\begin{split}
			\|\psi(\partial_x) (\eta_{1}\eta_{2}\eta_{3})\|_{H^s} 
			&\le C\|\eta_{1} \|_{H^s} \|\eta_{2}\eta_{3}\|_{H^s} \\
			&\le C\|\eta_{1}\|_{H^s}\| \eta_{2}\|_{H^s}\| \eta_{3}\|_{H^s}.
		\end{split}
	\end{equation}
	
	For inequality (\ref{xtrilin-1}), by the definition of \( \psi(\partial_x) \),
	\begin{equation}\label{xtrilin-11}
		\|\partial_x\psi(\partial_x) (\eta_1 \eta_2 \eta_3)\|_{H^1} = \|\langle \xi \rangle \xi \psi(\xi) \widehat{(\eta_1 \eta_2 \eta_3)}(\xi)\|_{L^2}.
	\end{equation}
	Note that
	$$
	|\xi \psi(\xi)| = \left| \frac{\xi^2}{1 + \gamma_1\xi^2 + \delta_1\xi^4} \right| \leq c.
	$$
	Since \( H^1 \) is an algebra,
	\begin{equation*}
		\|\partial_x\psi(\partial_x) (\eta_1 \eta_2 \eta_3)\|_{H^1} \leq c \|\eta_1 \eta_2 \eta_3\|_{H^1} \lesssim \|\eta_1 \|_{H^1}\|\eta_2\|_{H^1}\|\eta_3\|_{H^1}.
	\end{equation*}
\end{proof}

\begin{proposition}\label{P2}
	For \( s \ge 1 \), the inequalities
	\begin{equation}\label{Sharp1}
		\|\psi(\partial_x)[ (\eta_1)_x(\eta_2)_x]\|_{H^s} \le C \| \eta_1\|_{H^s}\| \eta_2\|_{H^s}
	\end{equation}
	and
	\begin{equation}\label{1Sharp1}
		\|\partial_x \psi(\partial_x)[ (\eta_1)_x(\eta_2)_x]\|_{H^1} \le C \| \eta_1\|_{H^1}\| \eta_2\|_{H^1}
	\end{equation}
	hold.
\end{proposition}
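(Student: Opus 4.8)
The plan is to deduce both estimates from one elementary fact — the convolution of two $L^{2}(\mathbb{R})$ functions lies in $L^{\infty}(\mathbb{R})$ with norm controlled by the product of the $L^{2}$ norms — together with the observation that, because $\gamma_{1},\delta_{1}>0$, the multiplier $\psi$ behaves like a symbol of order $-3$ (not merely order $-1$ as in Lemma~\ref{BT1}). Concretely, with $\langle\xi\rangle:=(1+\xi^{2})^{1/2}$ one has $\varphi(\xi)=1+\gamma_{1}\xi^{2}+\delta_{1}\xi^{4}\gtrsim\langle\xi\rangle^{4}$, so $|\psi(\xi)|\lesssim|\xi|\langle\xi\rangle^{-4}$ and $|\xi\psi(\xi)|\lesssim\langle\xi\rangle^{-2}$. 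I would also record that, since $\widehat{(\eta_{1})_{x}}(\xi)=i\xi\widehat{\eta_{1}}(\xi)$,
\begin{equation*}
	\widehat{(\eta_{1})_{x}(\eta_{2})_{x}}(\xi)=-\int_{\mathbb{R}}\xi_{1}(\xi-\xi_{1})\,\widehat{\eta_{1}}(\xi_{1})\,\widehat{\eta_{2}}(\xi-\xi_{1})\,d\xi_{1},
\end{equation*}
so that $|\widehat{(\eta_{1})_{x}(\eta_{2})_{x}}(\xi)|\le (g_{1}*g_{2})(\xi)$ with $g_{j}(\eta):=|\eta|\,|\widehat{\eta_{j}}(\eta)|$, and that the functions $|\xi|\langle\xi\rangle^{-3}$ and $\langle\xi\rangle^{-1}$ both belong to $L^{2}(\mathbb{R})$ — this is the only place the one-dimensional setting is used.

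For (\ref{Sharp1}) I would bound $\langle\xi\rangle^{s}|\psi(\xi)|\lesssim|\xi|\langle\xi\rangle^{s-4}=\big(|\xi|\langle\xi\rangle^{-3}\big)\langle\xi\rangle^{s-1}$ and split the $\xi_{1}$–integral into the regions $\{|\xi_{1}|\le|\xi-\xi_{1}|\}$ and $\{|\xi_{1}|>|\xi-\xi_{1}|\}$. On the first region $|\xi|\le 2|\xi-\xi_{1}|$, hence $\langle\xi\rangle\le 2\langle\xi-\xi_{1}\rangle$, and since $s\ge 1$ this gives $\langle\xi\rangle^{s-1}\lesssim\langle\xi-\xi_{1}\rangle^{s-1}$; absorbing $|\xi-\xi_{1}|$ into one further power of $\langle\xi-\xi_{1}\rangle$, I set $G(\eta):=\langle\eta\rangle^{s}|\widehat{\eta_{2}}(\eta)|$ and $g(\eta):=|\eta|\,|\widehat{\eta_{1}}(\eta)|$, so $\|G\|_{L^{2}}=\|\eta_{2}\|_{H^{s}}$ and $\|g\|_{L^{2}}=\|(\eta_{1})_{x}\|_{L^{2}}\le\|\eta_{1}\|_{H^{s}}$. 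Cauchy–Schwarz in $\xi_{1}$ yields $\int g(\xi_{1})G(\xi-\xi_{1})\,d\xi_{1}\le\|g\|_{L^{2}}\|G\|_{L^{2}}$ uniformly in $\xi$, so the contribution of this region to $\|\psi(\partial_{x})[(\eta_{1})_{x}(\eta_{2})_{x}]\|_{H^{s}}$ is at most $\big\||\xi|\langle\xi\rangle^{-3}\big\|_{L^{2}_{\xi}}\|g\|_{L^{2}}\|G\|_{L^{2}}\lesssim\|\eta_{1}\|_{H^{s}}\|\eta_{2}\|_{H^{s}}$. The second region is identical after interchanging $\eta_{1}$ and $\eta_{2}$, which establishes (\ref{Sharp1}).

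For (\ref{1Sharp1}) no frequency transfer is needed: using $\langle\xi\rangle|\xi\psi(\xi)|\lesssim\langle\xi\rangle^{-1}$ and $\|g_{j}\|_{L^{2}}\le\|\eta_{j}\|_{H^{1}}$, one gets
\begin{equation*}
	\big\|\partial_{x}\psi(\partial_{x})[(\eta_{1})_{x}(\eta_{2})_{x}]\big\|_{H^{1}}\lesssim\big\|\langle\xi\rangle^{-1}(g_{1}*g_{2})(\xi)\big\|_{L^{2}_{\xi}}\le\|\langle\xi\rangle^{-1}\|_{L^{2}}\,\|g_{1}*g_{2}\|_{L^{\infty}}\le C\|\eta_{1}\|_{H^{1}}\|\eta_{2}\|_{H^{1}}.
\end{equation*}

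I expect the only delicate point to be the dyadic-type split in (\ref{Sharp1}): the surplus weight $\langle\xi\rangle^{s-1}$ must be moved onto whichever factor carries the higher frequency, and the comparison $\langle\xi\rangle^{s-1}\lesssim\langle\xi-\xi_{1}\rangle^{s-1}$ (or its mirror image) holds precisely because $s\ge 1$; for $s<1$ the exponent is negative and the argument breaks down. It is also worth stressing that the derivative budget is tight — each factor $(\eta_{j})_{x}$ costs one derivative, so $\psi$ genuinely has to be treated as an order $-3$ operator; the cruder bound $|\psi(\xi)|\lesssim\omega(\xi)$ used in Propositions~\ref{P} and~\ref{P1} would only yield (\ref{Sharp1}) with $\|\eta_{1}\|_{H^{s+1}}\|\eta_{2}\|_{H^{s+1}}$ on the right (and (\ref{1Sharp1}) with $H^{2}$ norms), which is insufficient.
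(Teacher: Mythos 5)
Your argument is correct, but it takes a different and more self-contained route than the paper. The paper's proof is a three-line reduction to Lemma~\ref{BT1}: since $\gamma_1,\delta_1>0$ one has $|\psi(\xi)|\le C\,\omega(\xi)(1+|\xi|)^{-1}$, hence $\|\psi(\partial_x)f\|_{H^{s}}\le C\|\omega(\partial_x)f\|_{H^{s-1}}$; applying (\ref{bt}) at regularity $s-1\ge 0$ to the product $(\eta_1)_x(\eta_2)_x$ and using $\|(\eta_j)_x\|_{H^{s-1}}\le\|\eta_j\|_{H^{s}}$ gives (\ref{Sharp1}), and (\ref{1Sharp1}) follows the same way from $|\langle\xi\rangle\xi\psi(\xi)|\lesssim\omega(\xi)$ together with Plancherel and (\ref{bt}) at $s=0$. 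You instead bypass Lemma~\ref{BT1} entirely and reprove the needed bilinear bound from scratch on the Fourier side: writing the product as a convolution, splitting into the regions $|\xi_1|\le|\xi-\xi_1|$ and $|\xi_1|>|\xi-\xi_1|$, transferring the surplus weight $\langle\xi\rangle^{s-1}$ onto the high-frequency factor (which is where $s\ge1$ enters), and closing with Cauchy--Schwarz plus the square-integrability of $|\xi|\langle\xi\rangle^{-3}$ and $\langle\xi\rangle^{-1}$ in one dimension. Both proofs hinge on the same structural fact — $\psi$ decays at least one power of $\langle\xi\rangle$ faster than $\omega$, which is precisely the budget needed to pay for the two derivatives in $(\eta_1)_x(\eta_2)_x$ — but the paper's version is shorter because it outsources the convolution work to the quoted lemma, whereas yours is longer but independent of it and makes the frequency bookkeeping (and the sharpness of the threshold $s\ge1$ for this argument) explicit. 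One cosmetic remark: your claim that the crude bound $|\psi|\lesssim\omega$ would force $H^{s+1}$ norms on the right is accurate, but note the paper does not use that crude bound here either; it uses the intermediate gain $\omega(\xi)(1+|\xi|)^{-1}$, which is exactly one power weaker than your $|\xi|\langle\xi\rangle^{-4}$ and already suffices.
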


Here is the translation maintaining mathematical rigor and PDE terminology:

\begin{proof}
	Note that
	$$
	\psi(\xi) \leq C\omega(\xi) \frac{1}{1 + |\xi|}.
	$$
	By inequality (\ref{bt}) with \( s - 1 \ge 0 \), we obtain
	\begin{equation}\label{x6}
		\begin{split}
			\|\psi(\partial_x) [(\eta_1)_x(\eta_2)_x]\|_{H^s}  
			&\leq C\|\omega(\partial_x) [(\eta_1)_x(\eta_2)_x]\|_{H^{s-1}} \\
			&\le  C\|  (\eta_{1})_x\|_{H^{s-1}}\|  (\eta_{2})_x\|_{H^{s-1}} \\
			&\le  C\|\eta_{1}\|_{H^{s}}\|\eta_{2}\|_{H^{s}}.
		\end{split}
	\end{equation}
	This completes the proof of (\ref{Sharp1}).
	
	For inequality (\ref{1Sharp1}), by the definition of \( \psi(\partial_x) \),
	\begin{equation}\label{xtrilin-11}
		\|\partial_x\psi(\partial_x)[ (\eta_1)_x(\eta_2)_x]\|_{H^1} 
		= \|\langle \xi \rangle \xi \psi(\xi) \widehat{[ (\eta_1)_x(\eta_2)_x]}(\xi)\|_{L^2}.
	\end{equation}
	Note that
	$$
	|\langle \xi \rangle \xi \psi(\xi)| = \frac{\langle \xi \rangle \xi^2}{1 + \gamma_1\xi^2 + \delta_1\xi^4} 
	\leq c \frac{|\xi|}{1 + \xi^2} = c\omega(\xi).
	$$
	Applying the Plancherel theorem and Proposition \ref{BT1}, we get
	\begin{equation*}
		\begin{split}
			\|\partial_x\psi(\partial_x) [ (\eta_1)_x(\eta_2)_x]\|_{H^1} 
			&\leq c\|\omega(\partial_x)[ (\eta_1)_x(\eta_2)_x]\|_{L^2} \\
			&\lesssim \| (\eta_1)_x\|_{L^2} \| (\eta_2)_x\|_{L^2} \\
			&\lesssim \|\eta_1 \|_{H^1}\|\eta_2\|_{H^1}.
		\end{split}
	\end{equation*}
\end{proof}

\begin{theorem}[Local Well-Posedness]\label{mainTh1}
	Assume \( \gamma_1, \delta_1 > 0 \). For any \( s \geq 1 \) and initial data \( \eta_0 \in H^s(\mathbb{R}) \), there exists a time \( T = T(\|\eta_0\|_{H^s}) \) and a unique solution \( \eta \in C([0, T]; H^s) \) to the initial value problem of equation (\ref{eq1.6}). The solution map depends continuously on \( \eta_0 \) in \( C([0, T]; H^s) \).
\end{theorem}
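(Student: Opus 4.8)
The plan is to solve the integral equation (\ref{eq1.12}) by a fixed-point argument. For $T>0$ to be fixed later, let $X_T := C([0,T];H^s)$ with norm $\|\eta\|_{X_T}:=\sup_{0\le t\le T}\|\eta(t)\|_{H^s}$, let $R:=2\|\eta_0\|_{H^s}$, and let $B_R$ be the closed ball of radius $R$ in $X_T$. Define $\Phi$ on $B_R$ by
\[
\Phi(\eta)(t):=S(t)\eta_0-i\int_0^t S(t-t')\Big(\tau(\partial_x)\eta^2-\tfrac18\psi(\partial_x)\eta^3-\tfrac{7}{48}\psi(\partial_x)\eta_x^2\Big)(t')\,dt'.
\]
By the Duhamel formula a fixed point of $\Phi$ in $B_R$ is exactly a solution of (\ref{eq1.9}), hence of (\ref{eq1.6}), with data $\eta_0$.

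First I would verify that $\Phi$ maps $B_R$ into itself. Since $\widehat{S(t)f}(\xi)=e^{-i\phi(\xi)t}\widehat f(\xi)$ with $\phi(\xi)\in\mathbb R$, the operator $S(t)$ is unitary on every $H^s$, so $\|S(t)\eta_0\|_{H^s}=\|\eta_0\|_{H^s}$ and $S(t-t')$ drops out of the $H^s$-norm of the integrand. The three nonlinear terms are controlled by Proposition \ref{P}, Proposition \ref{P1} and Proposition \ref{P2}; these hold for $s\ge 1$ (the binding restriction, coming from Proposition \ref{P2}) and give, for $\eta\in B_R$,
\[
\big\|\tau(\partial_x)\eta^2\big\|_{H^s}\le C\|\eta\|_{H^s}^2,\qquad \big\|\psi(\partial_x)\eta^3\big\|_{H^s}\le C\|\eta\|_{H^s}^3,\qquad \big\|\psi(\partial_x)\eta_x^2\big\|_{H^s}\le C\|\eta\|_{H^s}^2.
\]
Integrating in $t'$ over $[0,T]$ and taking the supremum in $t$ yields $\|\Phi(\eta)\|_{X_T}\le\|\eta_0\|_{H^s}+CT(R^2+R^3)$. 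Choosing $T$ so small that $CT(R^2+R^3)\le\|\eta_0\|_{H^s}=R/2$, i.e. $T\le\big(2CR(1+R)\big)^{-1}$, a quantity depending only on $\|\eta_0\|_{H^s}$, gives $\Phi(B_R)\subset B_R$, while strong continuity of $S(\cdot)$ together with dominated convergence ensures $\Phi(\eta)\in C([0,T];H^s)$.

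Next I would establish the contraction estimate. For $\eta,\zeta\in B_R$, the identities $\eta^2-\zeta^2=(\eta-\zeta)(\eta+\zeta)$, $\eta^3-\zeta^3=(\eta-\zeta)(\eta^2+\eta\zeta+\zeta^2)$ and $\eta_x^2-\zeta_x^2=(\eta-\zeta)_x(\eta+\zeta)_x$, combined with the bilinear and trilinear estimates (\ref{bilin-1}), (\ref{trilin-1}), (\ref{Sharp1}) applied with distinct arguments, give
\[
\|\Phi(\eta)(t)-\Phi(\zeta)(t)\|_{H^s}\le CT\big(R+R^2\big)\,\|\eta-\zeta\|_{X_T};
\]
shrinking $T$ further if necessary so that $CT(R+R^2)\le\tfrac12$ makes $\Phi$ a contraction on $B_R$. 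The Banach fixed-point theorem then yields the unique $\eta\in B_R\subset C([0,T];H^s)$ with $\Phi(\eta)=\eta$, solving (\ref{eq1.12}). From $i\eta_t=\phi(\partial_x)\eta+\cdots$, with $\phi(\partial_x)$ of order one and the nonlinear part lying in $H^s$, one reads off $\eta_t\in C([0,T];H^{s-1})$, so $\eta$ is a genuine solution of (\ref{eq1.6}); uniqueness in all of $C([0,T];H^s)$, not merely in $B_R$, follows since two solutions agree on the maximal subinterval on which both remain in a common ball, and that subinterval is forced to be all of $[0,T]$ by the contraction estimate localized in time.

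Finally, for continuous dependence I would take data with $\|\eta_0\|_{H^s},\|\zeta_0\|_{H^s}\le\rho$, run the construction on a common interval $[0,T]$ with $T=T(\rho)$, and subtract the two integral equations: the same product identities and multilinear estimates give $\|\eta-\zeta\|_{X_T}\le\|\eta_0-\zeta_0\|_{H^s}+\tfrac12\|\eta-\zeta\|_{X_T}$, hence $\|\eta-\zeta\|_{X_T}\le 2\|\eta_0-\zeta_0\|_{H^s}$, i.e. Lipschitz dependence in $C([0,T];H^s)$. Iterating the local statement from later times produces a maximal existence time $T^\ast$ with the blow-up dichotomy $T^\ast=\infty$ or $\limsup_{t\uparrow T^\ast}\|\eta(t)\|_{H^s}=\infty$, and the explicit choice of $T$ delivers the growth bound $T^\ast\gtrsim(1+\|\eta_0\|_{H^s})^{-2}$. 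I expect the only genuinely delicate point to be the regularity threshold: the term $\psi(\partial_x)\eta_x^2$ places two derivatives on the quadratic nonlinearity while $\psi$ smooths by only one, so Proposition \ref{P2}, and hence the whole contraction scheme, closes exactly at $s\ge1$; the remaining steps are routine fixed-point bookkeeping.
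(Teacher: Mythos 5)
Your proposal is correct and follows essentially the same route as the paper: rewrite the problem as the Duhamel integral equation, exploit the unitarity of $S(t)$ on $H^s$, control the three nonlinear terms via Propositions \ref{P}, \ref{P1} and \ref{P2}, and close a contraction on the ball of radius $2\|\eta_0\|_{H^s}$ with $T\sim \big(C\|\eta_0\|_{H^s}(1+\|\eta_0\|_{H^s})\big)^{-1}$. The only difference is that you spell out the contraction, uniqueness and continuous-dependence steps that the paper leaves as ``standard,'' and you correctly identify Proposition \ref{P2} as the source of the $s\ge 1$ threshold.
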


\begin{proof}
	Define the mapping
	\begin{equation}\label{eq3.42}
		\Psi\eta(x, t) = S(t)\eta_0 -i\int_0^t S(t-t')\Big(\tau(D_x)\eta^2 - \frac{1}{4} \psi(\partial_x)\eta^3 - \frac{7}{48} \psi(\partial_x)\eta_x^2\Big)(x, t') dt'.
	\end{equation}
	We show that \( \Psi \) is a contraction mapping on the closed ball \( \mathcal{B}_r \subset C([0, T]; H^s) \) centered at the origin with radius \( r > 0 \).
	
	Since \( S(t) \) is a unitary group on \( H^s(\mathbb{R}) \) (cf. (\ref{eq1.11})),
	\begin{equation}\label{eq3.43}
		\|\Psi\eta\|_{H^s} \leq \|\eta_0\|_{H^s} + CT\Big[\big{\|}\tau(\partial_x)\eta^2 - \frac{1}{8} \psi(\partial_x)\eta^3 - \frac{7}{48}\psi(\partial_x)\eta_x^2\big{\|}_{C([0, T]; H^s)}\Big].
	\end{equation}
	Combining inequalities (\ref{bilin-1}), (\ref{trilin-1}), and (\ref{Sharp1}), we have
	\begin{equation}\label{eq3.44}
		\|\Psi\eta\|_{H^s} \leq \|\eta_0\|_{H^s} + CT\Big[\|\eta\|_{C([0, T]; H^s)}^2 + \|\eta\|_{C([0, T]; H^s)}^3 + \|\eta\|_{C([0, T]; H^s)}^2\Big].
	\end{equation}
	For \( \eta \in \mathcal{B}_r \), this simplifies to
	\begin{equation}\label{eq3.45}
		\|\Psi\eta\|_{H^s} \leq \|\eta_0\|_{H^s} + CT[2r + r^2]r.
	\end{equation}
	
	Choosing \( r = 2\|\eta_0\|_{H^s} \) and \( T = \frac{1}{2Cr(2 + r)} \), we ensure \( \|\Psi\eta\|_{H^s} \leq r \), showing \( \Psi \) maps \( \mathcal{B}_r \) into itself. Additionally, \( \Psi \) is a contraction with constant \( \frac{1}{2} \) on \( \mathcal{B}_r \). The remaining proof follows standard local well-posedness arguments.
\end{proof}

\begin{remark}\label{rm2.1}
	The proof of Theorem \ref{mainTh1} yields the following conclusions:
	\begin{enumerate}
		\item The maximal existence time \( T = T_s \) satisfies
		\begin{equation}\label{r2.45}
			T \geq \bar{T} = \frac{1}{8C_s\|\eta_0\|_{H^s}(1+\|\eta_0\|_{H^s})},
		\end{equation}
		where the constant \( C_s \) depends only on \( s \).
		
		\item The solution cannot grow excessively: for any \( t \in [0, \bar{T}] \),
		\begin{equation}\label{r2.46}
			\|\eta(\cdot, t)\|_{H^s} \leq r = 2\|\eta_0\|_{H^s},
		\end{equation}
		with \( \bar{T} \) defined in (\ref{r2.45}).
	\end{enumerate}
\end{remark}

\section{Global Well - Posedness}

\subsection{Global Well - Posedness in $H^2$}
First, under the specific restrictions on the parameters in (\ref{eq1.6}), we derive the a - priori estimates in \(H^2(\mathbb{R})\). Multiply equation (\ref{eq1.6}) by \(\eta\), integrate over the spatial domain \(\mathbb{R}\), and use integration by parts to obtain
\begin{equation}
	\frac{1}{2} \frac{d}{dt} \int_{\mathbb{R}} \left(\eta^2 + \gamma_1\eta_x^2+\delta_1\eta_{xx}^2 \right)dx+ \gamma \int_{\mathbb{R}}(\eta^2)_{xxx} \eta dx-\frac{7}{48} \int_{\mathbb{R}} (\eta_x^2)_x \eta dx = 0.
\end{equation}
Further integration by parts gives
\begin{equation}\label{apriori1}
	\frac{1}{2} \frac{d}{dt} \int_{\mathbb{R}} \left(\eta^2 + \gamma_1\eta_x^2+\delta_1\eta_{xx}^2 \right)dx= \left(\gamma -\frac{7}{48}\right)\int_{\mathbb{R}}\eta_x^3 dx.
\end{equation}

From (\ref{apriori1}), we can obtain an a - priori estimate when \(\gamma=\frac{7}{48}\).
Suppose we choose \(\theta, \lambda, \mu, \lambda_1\), \(\mu_1\) and \(\rho\) such that \(\gamma = \frac{7}{48}\) and \(\gamma_1, \delta_1>0\) still hold. In this case, equation (\ref{eq1.6}) becomes
\begin{equation}\label{eta}
	\begin{split}
		\partial_{t}\eta + \partial_{x}\eta &- \gamma_1 \partial_{x}^{2}\partial_{t}\eta+\gamma_2\partial_{x}^{3}\eta + \delta_1\partial_{x}^{4}\partial_{t}\eta + \delta_2\partial_{x}^{5}\eta \\
		&+ \frac{3}{4}\partial_{x}(\eta^2) +\frac{7}{48}\partial_{x}^{3}\big(\eta^2\big) - \frac{7}{48}\partial_{x}\big(\eta_x^2\big)  -\frac{1}{8}\partial_{x}\big(\eta^3\big) = 0.
	\end{split}
\end{equation}
In this situation, there is the following conserved quantity
\begin{equation}\label{apriori4}
	E(\eta(\cdot, t)):=\frac{1}{2}\int_{\mathbb{R}} \eta^2 + \gamma_1 (\eta_x)^2+\delta_1(\eta_{xx})^2 dx = E(\eta_0).
\end{equation}

The conserved quantity (\ref{apriori4}) is essentially the \(H^2\) norm, which leads to the following global well - posedness result.

\begin{theorem}[Global Well - Posedness]\label{mainTh2}
	Assume \(s\geq 2\), \(\gamma_1, \delta_1>0\) and \(\gamma=\frac{7}{48}\). Then the initial value problem of equation (\ref{eq1.6}) is globally well - posed in \(H^s(\mathbb{R})\).
\end{theorem}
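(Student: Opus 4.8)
The plan is to upgrade the local theory of Theorem \ref{mainTh1} to a global one by exploiting the conserved energy (\ref{apriori4}), which is available precisely because $\gamma=\tfrac{7}{48}$. Since $\gamma_1,\delta_1>0$, the quadratic form $E(\eta)=\tfrac12\int_{\mathbb R}(\eta^2+\gamma_1\eta_x^2+\delta_1\eta_{xx}^2)\,dx$ is equivalent to $\|\eta\|_{H^2}^2$, so conservation of $E$ gives $\|\eta(t)\|_{H^2}\le M$ on the whole maximal interval of existence, with $M$ depending only on $\|\eta_0\|_{H^2}\le\|\eta_0\|_{H^s}$ (and on $\gamma_1,\delta_1$). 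This is the only global input; everything else is the local theory plus a Gr\"onwall argument.

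For $s=2$ I would argue by the usual continuation/blow-up alternative. Let $[0,T^*)$ be the maximal existence interval from Theorem \ref{mainTh1}; if $T^*<\infty$, then at a time $T^*-\varepsilon$ the solution has $H^2$ norm $\le M$, and re-solving from that time, the lower bound (\ref{r2.45}) in Remark \ref{rm2.1} (applied with initial datum $\eta(\cdot,T^*-\varepsilon)$, using that $x\mapsto\tfrac{1}{8C_2x(1+x)}$ is decreasing) furnishes a further existence time $\ge\bar T(M)=\tfrac{1}{8C_2M(1+M)}>0$ that is independent of $\varepsilon$. Taking $\varepsilon<\bar T(M)$ contradicts maximality, so $T^*=\infty$ and the flow is global in $H^2$, with $\|\eta(t)\|_{H^2}^2$ uniformly controlled by $E(\eta_0)$.

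For $s>2$ I would run an $H^s$ energy estimate. Writing the equation as $\partial_t\eta=-\varphi(\partial_x)^{-1}(\partial_x+\gamma_2\partial_x^3+\delta_2\partial_x^5)\eta-\varphi(\partial_x)^{-1}\mathcal N(\eta)$ with $\mathcal N(\eta)=\tfrac34\partial_x(\eta^2)+\tfrac{7}{48}\partial_x^3(\eta^2)-\tfrac{7}{48}\partial_x(\eta_x^2)-\tfrac18\partial_x(\eta^3)$ and $\varphi(\xi)=1+\gamma_1\xi^2+\delta_1\xi^4>0$, I note two structural facts: the linear operator $\varphi(\partial_x)^{-1}(\partial_x+\gamma_2\partial_x^3+\delta_2\partial_x^5)$ has purely imaginary Fourier symbol, hence is skew-adjoint and commutes with $\langle D\rangle^s$, so it contributes nothing to $\tfrac{d}{dt}\|\eta\|_{H^s}^2$ for real $\eta$; and $\varphi(\partial_x)^{-1}\partial_x^3$, $\varphi(\partial_x)^{-1}\partial_x$ have symbols bounded by $C\langle\xi\rangle^{-1}$ and $C\langle\xi\rangle^{-3}$ respectively, i.e. the BBM structure makes the nonlinear terms smoothing. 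Together with the Moser/Kato--Ponce product estimates and the one-dimensional embedding $H^2(\mathbb R)\hookrightarrow W^{1,\infty}(\mathbb R)$, this yields $\|\langle D\rangle^s\varphi(\partial_x)^{-1}\mathcal N(\eta)\|_{L^2}\le C\big(\|\eta\|_{H^2}+\|\eta\|_{H^2}^2\big)\|\eta\|_{H^s}$, and pairing with $\langle D\rangle^s\eta$ gives
\begin{equation}\label{plan-Gronwall}
\frac{d}{dt}\|\eta(t)\|_{H^s}^2\le C\big(\|\eta(t)\|_{H^2}+\|\eta(t)\|_{H^2}^2\big)\|\eta(t)\|_{H^s}^2\le C(M+M^2)\|\eta(t)\|_{H^s}^2 .
\end{equation}
Gr\"onwall's inequality then bounds $\|\eta(t)\|_{H^s}$ on every finite interval, and the same continuation argument as for $s=2$ (the existence time in Theorem \ref{mainTh1}/Remark \ref{rm2.1} depends only on the current $H^s$ norm) gives $T^*=\infty$. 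Continuous dependence in $C([0,T];H^s)$ follows either from the local continuous dependence together with these uniform bounds, or directly from an energy estimate for the difference of two solutions.

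The main obstacle is rigor in the computation leading to (\ref{plan-Gronwall}): a priori the solution lies only in $C([0,T];H^s)$ with $\partial_t\eta\in C([0,T];H^{s-1})$, so $\tfrac{d}{dt}\|\eta\|_{H^s}^2$ and the commutator manipulations are not literally justified at that regularity. The standard fix is to carry out the estimate on smooth approximating solutions -- mollify $\eta_0$, solve in $H^{s'}$ for large $s'$ by Theorem \ref{mainTh1}, prove (\ref{plan-Gronwall}) there, and pass to the limit using uniqueness and continuous dependence (a Bona--Smith type argument). The only genuinely technical point is the bookkeeping of the Kato--Ponce commutator terms, but this is comfortable because $s-1>1$ keeps us well inside the algebra range of $H^{s-1}(\mathbb R)$; for the borderline case $s=2$ no such estimate is even needed, since conservation of $E$ already controls the norm directly.
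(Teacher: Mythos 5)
Your proposal is correct, and for $s=2$ it coincides with the paper: both arguments combine the local theory with the fact that the conserved functional (\ref{apriori4}) is equivalent to the squared $H^2$ norm when $\gamma_1,\delta_1>0$, and then invoke the continuation criterion from Remark \ref{rm2.1}. For $s>2$, however, you take a genuinely different route. The paper proceeds by induction on integer $k$: it differentiates equation (\ref{eta}) in $x$, multiplies by $\eta_x$, integrates by parts to obtain an exact energy identity at the $H^3$ level whose cubic remainder is controlled pointwise in time by the conserved quantity, yielding an $H^3$ bound that grows only linearly in $t$; higher integer $k$ is handled the same way, and non-integer $s\ge 2$ is then recovered by nonlinear interpolation in the style of Bona--Scott. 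You instead run a single $\langle D\rangle^s$ energy/Gr\"onwall estimate valid for all real $s>2$ at once, exploiting the skew-adjointness of the purely imaginary linear symbol and the smoothing of $\varphi(\partial_x)^{-1}$ (symbols of $\varphi(\partial_x)^{-1}\partial_x^3$ and $\varphi(\partial_x)^{-1}\partial_x$ of order $-1$ and $-3$) together with Moser-type product estimates; in fact the smoothing is strong enough that no commutator estimates are needed at all, only tame product bounds. Your method buys uniformity in $s$ and dispenses with the interpolation machinery (whose references the paper leans on), at the cost of an exponential-in-time a priori bound rather than the paper's polynomial one, and it requires the regularization/Bona--Smith step you correctly flag in order to justify differentiating $\|\eta(t)\|_{H^s}^2$ at the available regularity. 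Both routes are sound.
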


\begin{proof}
	Following the standard arguments, the global well - posedness in \(H^2(\mathbb{R})\) is a result of the local theory and the a - priori bounds implied by the conserved quantity (\ref{apriori4}). To prove the global well - posedness in \(H^k\) (\(k\geq3\) is an integer), we can use induction on \(k\).
	
	Assume \(\eta_0\in H^3\). By the local well - posedness, there exists a solution \(\eta\in C([0, T];H^3)\). If the \(H^3\) norm of \(\eta\) has an a - priori bound on a finite - time interval, we can iterate the local solution to obtain a global solution.
	
	Differentiate equation (\ref{eta}) with respect to the spatial variable, multiply the resulting equation by \(\eta_x\) and integrate over \(\mathbb{R}\). Then, using integration by parts with respect to the spatial variable, we get
	\begin{equation}\label{eta - 2}
		\begin{split}
			\frac{1}{2} \frac{d}{dt} \int_{\mathbb{R}} \left(\eta_x^2 + \gamma_1 \eta_{xx}^2+\delta_1\eta_{xxx}^2 \right)dx+\frac{3}{4} \int_{\mathbb{R}}\eta_{x}^3 dx \\
			- 3\gamma \int_{\mathbb{R}}\eta_{xx}^2 \eta_{x}dx-\frac{3}{8} \int_{\mathbb{R}} \eta_x^3 \eta dx = 0.
		\end{split}
	\end{equation}
	By the Sobolev embedding theorem, for any time \(t\) when the solution exists, we have
	\begin{equation}\label{immers11}
		\begin{split}
			\|\eta\|_{L_x^2}^2 \leq 2 E_0, \quad \|\eta_x\|_{L_x^2}^2 \leq \frac{2}{\gamma_1}E_0, \quad \|\eta_{xx}\|_{L_x^2}^2 \leq \frac{2}{\delta_1}E_0, \\
			\|\eta\|_{L_x^{\infty}}^2 \leq \frac{4}{\sqrt{\gamma_1}} E_0, \quad \|\eta_x\|_{L_x^{\infty}}^2 \leq \frac{4}{\sqrt{\delta_1\gamma_1}} E_0,
		\end{split}
	\end{equation}
	where \(E_0 = E(\eta_0)\).
	Integrate equation (\ref{eta - 2}) with respect to time over the interval \([0, t]\), make basic estimates for all terms that do not involve third - order derivatives, and use (\ref{immers11}). We obtain
	\begin{eqnarray}
		\delta_1\int_{\mathbb{R}} \eta_{xxx}^2 dx&\leq& \int_{\mathbb{R}} \left( (\eta_{0x})^2+\gamma_1(\eta_{0xx})^2 + \delta_1(\eta_{0xxx})^2\right)dx \nonumber\\ 
		& &+ C \int_0^t \|\eta_x\|_{L_x^{\infty}} \left( \|\eta_x\|_{L_x^2}^2+ \|\eta_{xx}\|_{L_x^2}^2+\|\eta_x\|_{L_x^2}^2 \|\eta\|_{L_x^{\infty}}\right) dx\nonumber \\
		&\leq& \delta_1\int_{\mathbb{R}} (\eta_{0xxx})^2 dx+ CE_0 + C E_0^{3/2}\left( 1+ E_0^{1/2}\right) t, \nonumber
	\end{eqnarray}
	from which we get the desired \(H^3\) bound.
	
	Assume there is an \(H^k\) bound. By similar energy estimates, as long as the initial value \(\eta_0\in H^{k + 1}\), the solution \(\eta\) has an \(H^{k+1}\) bound. 
	
	To obtain the global well - posedness in the fractional - order Sobolev space \(H^s\) (\(s\geq2\) is not an integer), we can use the theory of nonlinear interpolation (see \cite{ref45, ref46}), thus completing the proof of the theorem.
\end{proof}
\subsection{Global Well - Posedness in $H^s$, $s\geq 1$}
Let the initial value \(\eta_0 \in H^s~(s\geq 1)\). Decompose the initial value \(\eta_0\) as \(\eta_0 = u_0 + v_0\), where \(\widehat{u_0}=\widehat{\eta_0} \chi_{\{|\xi| \leq N\}}\) and \(N\) is a large number to be chosen later. Then \(u_0 \in H^\delta~(\delta \geq s)\) and \(v_0 \in H^s\). Moreover, we have
\begin{equation}\label{31}
	\begin{split}
		\|u_0\|_{L^2}&\leq \|\eta_0\|_{L^2},  \\
		\|u_0\|_{\dot{H}^{\delta}}&\leq \|\eta_0\|_{\dot{H}^s}\,N^{\delta - s},  \qquad \delta\geq s.  
	\end{split}
\end{equation}
and
\begin{equation}\label{32}
	\|v_0\|_{H^{\rho}} \leq \|\eta_0\|_{H^s}\,N^{(\rho - s)},  \qquad 0\leq\rho\leq s.  
\end{equation}

For each part \(u_0\) and \(v_0\) of \(\eta_0\), they serve as the initial values of the following Cauchy problems respectively:
\begin{equation}\label{xeq1}
	\begin{cases}
		iu_t = \phi(\partial_x)u + F(u),  \\
		u(x,0) = u_0(x).  
	\end{cases}
\end{equation}
where \(F(u)= \tau (\partial_x)u^2 - \frac{1}{8}\psi(\partial_x)u^3  -\frac{7}{48}\psi(\partial_x)u_x^2\), and
\begin{equation}\label{xeq2}
	\begin{cases}
		iv_t = \phi(\partial_x)v + F(u + v)-F(u),  \\
		v(x,0) = v_0(x).  
	\end{cases}
\end{equation}
Furthermore, within the common existence time interval of \(u\) and \(v\), \(\eta(x,t)=u(x,t)+v(x,t)\) satisfies the original Cauchy problem (\ref{eq1.6}). Next, we need to prove that there exists a time \(T_u\) such that the Cauchy problem (\ref{xeq1}) is locally well - posed on \([0, T_u]\). After fixing the solution \(u\) of equation (\ref{xeq1}), we need to prove that there exists a time \(T_v\) such that the Cauchy problem (\ref{xeq2}) is locally well - posed on \([0, T_v]\). In this way, for \(t_0\leq\min\{T_u, T_v\}\), under the given initial condition in \(H^s\) (\(s\geq 1\)), \(\eta = u + v\) is the solution of the Cauchy problem (\ref{eq1.6}) on the time interval \([0, t_0]\). Finally, we iterate this process, keeping \(t_0\) as the length of the existence time in each iteration, so as to cover any given time interval \([0, T]\).

According to Theorem \ref{mainTh1}, when \(s \geq 1\), the Cauchy problem (\ref{xeq1}) is locally well - posed in \(H^s\), and the existence time is given by \(T_{u} =\dfrac{c_s}{\|u_0\|_{H^s}(1+\|u_0\|_{H^s})}\). According to Theorem \ref{mainTh2}, when \(s \geq 2\), the Cauchy problem (\ref{xeq1}) is globally well - posed in \(H^s\). Regarding the well - posedness of the Cauchy problem (\ref{xeq2}) with variable coefficients depending on \(u\), we have the following result.

\begin{theorem}\label{mainTh4}
	Assume \(\gamma_1, \delta_1 >0\), and \(u\) is the solution of the Cauchy problem (\ref{xeq1}). For any \(s\geq 1\) and a given \(v_0\in H^s(\mathbb{R})\), there exists a time \(T_v =\dfrac{c_s}{(\|v_0\|_{H^s}+\|u_0\|_{H^s})(1+\|v_0\|_{H^s}+\|u_0\|_{H^s})}\), and a unique function \(v \in C([0,T_v];H^s)\), which is the solution of the Cauchy problem with the initial value \(v_{0}\). As \(v_0\) varies in \(H^s\), the solution \(v\) varies continuously in \(C([0,T_v];H^s)\).
\end{theorem}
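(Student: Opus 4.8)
The plan is to mimic the contraction-mapping argument of Theorem \ref{mainTh1}, but now for the Duhamel formulation of the variable-coefficient problem (\ref{xeq2}), treating the fixed function $u$ as data. Writing $S(t)$ for the unitary group from (\ref{eq1.10})--(\ref{eq1.11}), define the map
\begin{equation*}
	\Phi v(x,t) = S(t)v_0 - i\int_0^t S(t-t')\big(F(u+v)-F(u)\big)(x,t')\,dt',
\end{equation*}
and seek a fixed point in a closed ball $\mathcal{B}_R \subset C([0,T_v];H^s)$ centered at the origin. The key algebraic observation is that $F(u+v)-F(u)$ contains no term that is purely a function of $u$: every summand carries at least one factor of $v$. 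Expanding the quadratic, cubic, and derivative-quadratic pieces of $F$, one gets schematically $F(u+v)-F(u) = \tau(\partial_x)(2uv+v^2) - \tfrac18\psi(\partial_x)(3u^2v+3uv^2+v^3) - \tfrac{7}{48}\psi(\partial_x)(2u_xv_x+v_x^2)$. Each of these is estimated in $H^s$ ($s\ge1$) by the multilinear estimates already proved: Proposition \ref{P} for the $\tau(\partial_x)$-terms, Proposition \ref{P1} for the $\psi(\partial_x)$ cubic terms, and Proposition \ref{P2} for the $\psi(\partial_x)(\cdot_x\,\cdot_x)$ terms. Using $\|S(t-t')\cdot\|_{H^s}=\|\cdot\|_{H^s}$ and bounding the time integral by $T_v$ times the sup over $[0,T_v]$, one obtains
\begin{equation*}
	\|\Phi v\|_{C([0,T_v];H^s)} \le \|v_0\|_{H^s} + C T_v\, P\big(\|u\|_{C([0,T_u];H^s)},\,\|v\|_{C([0,T_v];H^s)}\big),
\end{equation*}
where $P$ is a polynomial, each of whose monomials is linear or higher in the $v$-norm and of total degree between $2$ and $3$.

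Next I would choose the radius and time exactly as in Theorem \ref{mainTh1}: set $R = 2\|v_0\|_{H^s}$, recall from Remark \ref{rm2.1} that $\|u\|_{C([0,T_u];H^s)}\le 2\|u_0\|_{H^s}$ on its existence interval, and then pick $T_v$ small enough—of the stated form $T_v = c_s/\big((\|v_0\|_{H^s}+\|u_0\|_{H^s})(1+\|v_0\|_{H^s}+\|u_0\|_{H^s})\big)$—so that $C T_v\,[\,\text{coefficient bound}\,] \le \tfrac12$. With this choice $\Phi$ maps $\mathcal{B}_R$ into itself. For the contraction property, estimate $\Phi v_1 - \Phi v_2$: the difference $\big(F(u+v_1)-F(u)\big) - \big(F(u+v_2)-F(u)\big) = F(u+v_1)-F(u+v_2)$ factors (via the telescoping/algebraic identity $a^2-b^2=(a-b)(a+b)$, $a^3-b^3=(a-b)(a^2+ab+b^2)$, and $a_xb_x - c_xd_x$ handled by adding and subtracting $a_xd_x$) into a sum of multilinear terms each carrying exactly one factor of $v_1-v_2$ and the remaining factors among $u,v_1,v_2$. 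Applying the same three propositions and the ball bounds gives $\|\Phi v_1-\Phi v_2\|_{C([0,T_v];H^s)} \le \tfrac12\|v_1-v_2\|_{C([0,T_v];H^s)}$ after shrinking $c_s$ if necessary. The contraction mapping principle then yields the unique fixed point $v\in C([0,T_v];H^s)$.

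Finally, continuous dependence on $v_0$ follows by the standard argument: if $v,\tilde v$ are the solutions with data $v_0,\tilde v_0$, subtract their integral equations and apply the same multilinear estimates to get $\|v-\tilde v\|_{C([0,T_v];H^s)} \le \|v_0-\tilde v_0\|_{H^s} + \tfrac12\|v-\tilde v\|_{C([0,T_v];H^s)}$ on a common (shrunk) time interval, hence $\|v-\tilde v\|_{C([0,T_v];H^s)} \le 2\|v_0-\tilde v_0\|_{H^s}$. The main obstacle—really the only nonroutine point—is bookkeeping the mixed $u$-$v$ multilinear terms: one must check that \emph{every} term produced by $F(u+v)-F(u)$ and by $F(u+v_1)-F(u+v_2)$ falls under the hypotheses of Propositions \ref{P}, \ref{P1}, \ref{P2} at regularity $s\ge1$ (in particular the derivative-quadratic term needs $s\ge1$, which is exactly the hypothesis), and that the resulting coefficient in front of $T_v$ is a genuine polynomial in $\|u_0\|_{H^s}+\|v_0\|_{H^s}$ so that the stated formula for $T_v$ does the job. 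Everything else is the verbatim contraction argument of Theorem \ref{mainTh1}.
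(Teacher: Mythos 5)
Your proposal is correct and follows essentially the same route as the paper: the same Duhamel formulation, the same expansion of $F(u+v)-F(u)$ into terms each carrying a factor of $v$, the same application of Propositions \ref{P}, \ref{P1}, \ref{P2} together with the bound $\sup_{t}\|u(t)\|_{H^s}\lesssim\|u_0\|_{H^s}$, and the same choice of radius $a=2\|v_0\|_{H^s}$ and of $T_v$ to close the contraction. Your explicit treatment of the difference $F(u+v_1)-F(u+v_2)$ and of continuous dependence only spells out what the paper labels ``standard.''
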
 
\begin{proof}
	By the Duhamel formula, the integral equation equivalent to (\ref{xeq2}) is
	\begin{equation}\label{xeq6}
		\begin{split}
			v(x,t) &= S(t)v_0 -i\int_0^tS(t - t')\Big( F(u + v)-F(u)\Big)(x, t') dt'\\
			&=: S(t)v_0 + h(x,t).
		\end{split}
	\end{equation}
	where
	\begin{equation}\label{xeq7}
		\begin{split}
			F(u + v)-F(u)= \tau (\partial_x)(v^2 + 2vu) 
			&- \frac{1}{8}\psi(\partial_x)(3u^2v+3uv^2+v^3)\\ &-\frac{7}{48}\psi(\partial_x)(2u_xv_x+v_x^2).
		\end{split}  
	\end{equation}
	
	Let \(u\in C([0,T_u];H^s)\) be the solution of the Cauchy problem (\ref{xeq1}) obtained from Theorem \ref{mainTh1}, and it satisfies
	\begin{equation}\label{eqofu}
		\sup_{t \in [0, T_u]}\|u(t)\|_{H^s} \lesssim \|u_0\|_{H^s}.  
	\end{equation} 
	
	Let
	$$
	X_T^a= \{ v \in C([0,T];H^s) : ~ |||v|||:=\sup_{t \in [0, T]}\|v(t)\|_{H^s}  \leq a \}
	$$
	where \(a:=2\|v_0\|_{H^s}\), and consider the mapping
	$$
	\Phi_u(v)(x,t) = S(t)v_0 -i\int_0^tS(t - t')\Big( F(u + v)-F(u)\Big)(x, t') dt'.  
	$$
	
	Next, we prove that the mapping \(\Phi_u(v)\) is a contraction mapping on \(X_T^a\). By definition, \(S(t)\) is a unitary group in \(H^s(\mathbb{R})\). So when \(T\leq T_u\), we have
	\begin{equation*}
		\begin{split}
			\|\Phi_u(v)\|_{H^s}& \leq \|v_0\|_{H^s} +T |||\tau (\partial_x)(v^2 + 2vu) - \frac{1}{8}\psi(\partial_x)(3u^2v+3uv^2+v^3) \\
			&\qquad\qquad\qquad\quad -\frac{7}{48}\psi(\partial_x)(2u_xv_x+v_x^2)|||.  
		\end{split}
	\end{equation*}
	From the inequalities (\ref{bilin-1}), (\ref{trilin-1}), (\ref{Sharp1}) and (\ref{eqofu}), we get
	\begin{equation*}
		\begin{split}
			\|\Phi_u(v)\|_{H^s} &\leq \|v_0\|_{H^s} +T |||\tau (\partial_x)(v^2 + 2vu) - \frac{1}{8}\psi(\partial_x)(3u^2v+3uv^2+v^3) \\
			&\qquad\qquad\qquad\quad
			-\frac{7}{48}\psi(\partial_x)(2u_xv_x+v_x^2)||| \\
			&\leq   \frac{a}{2}+cT|||v||| (|||v|||+ \|u_0\|_{H^s}) +cT|||v||| (\|u_0\|_{H^s}^2+\|u_0\|_{H^s} |||v|||+ |||v|||^2)
			\\
			&\leq   \frac{a}{2}+cT[a (a+ \|u_0\|_{H^s}) (1+a+ \|u_0\|_{H^s})].  
		\end{split}
	\end{equation*}
	If we take
	$$
	cT[(a+ \|u_0\|_{H^s}) (1+a+ \|u_0\|_{H^s})] = \frac{1}{2}
	$$
	then \(\|\Phi_u(v)\|_{H^s} \leq a\), which means that \(\Phi_u(v)\) maps the closed ball \(X_T^a\) in \(C([0,T];H^s)\) to itself. By choosing the same \(a\) and \(T\) and using the same estimates, we can prove that the mapping \(\Phi_u(v)\) is a contraction mapping on \(X_T^a\) with a contraction constant of \(\frac{1}{2}\). The rest of the proof is standard.
\end{proof}

The following lemma plays a fundamental role in proving the global well - posedness result.

\begin{lemma}\label{lemah1}
	Let \(u\) be the solution to the Cauchy problem (\ref{xeq1}) and \(v\) be the solution to the Cauchy problem (\ref{xeq2}). Then \(h = h(u, v)\) defined in \ref{xeq6} belongs to \(C([0, t_0], H^2)\), and
	\begin{equation}\label{estim1}
		\|u(t_0)\|_{H^2} \lesssim N^{2 - s} \quad \text{and} \quad \|h(t_0)\|_{H^2} \lesssim N^{s - 3},
	\end{equation}
	where \(t_0 \sim N^{-2(2 - s)}\).
\end{lemma}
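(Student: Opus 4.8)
The plan is to establish the two bounds in \eqref{estim1} separately and then check $h \in C([0,t_0],H^2)$. For the bound on $\|u(t_0)\|_{H^2}$, the point is that $u_0$ is frequency-localized to $\{|\xi|\le N\}$, so $\|u_0\|_{H^2}\lesssim N^{2-s}\|\eta_0\|_{H^s}$ by \eqref{31} (interpolating, or just using $\langle\xi\rangle^2 \le N^{2-s}\langle\xi\rangle^s$ on the support). Since $s\ge 1$ we have $u_0\in H^2$, and by the local theory (Theorem \ref{mainTh1}) the $H^2$ solution $u$ exists on $[0,T_u]$ with $T_u \sim \|u_0\|_{H^2}^{-1}(1+\|u_0\|_{H^2})^{-1} \sim N^{-2(2-s)}$ when $N$ is large. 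On that interval Remark \ref{rm2.1} gives $\|u(t)\|_{H^2}\le 2\|u_0\|_{H^2}\lesssim N^{2-s}$. Choosing $t_0 \sim N^{-2(2-s)}$ so that $t_0\le T_u$ yields the first estimate. (One must also keep the $H^s$ solution $u$ from Theorem \ref{mainTh1} and note by uniqueness it coincides with the $H^2$ solution on the overlap, so the two flows are consistent.)

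For the bound on $\|h(t_0)\|_{H^2}$, I would work directly from the Duhamel term
\[
h(t) = -i\int_0^t S(t-t')\big(F(u+v)-F(u)\big)(t')\,dt',
\]
with $F(u+v)-F(u)$ expanded as in \eqref{xeq7}. Since $S(t-t')$ is unitary on $H^2$,
\[
\|h(t_0)\|_{H^2} \le \int_0^{t_0} \big\|F(u+v)-F(u)\big\|_{H^2}\,dt' \le t_0 \sup_{[0,t_0]}\big\|F(u+v)-F(u)\big\|_{H^2}.
\]
Now apply the multilinear estimates of Propositions \ref{P}, \ref{P1}, \ref{P2} term by term to $\tau(\partial_x)(v^2+2vu)$, $\psi(\partial_x)(3u^2v+3uv^2+v^3)$ and $\psi(\partial_x)(2u_xv_x+v_x^2)$ in $H^2$. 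Every term contains at least one factor of $v$; using $\|u(t)\|_{H^2}\lesssim N^{2-s}$ and $\|v(t)\|_{H^2}\le\|v(t)\|_{H^s}$ bounded on $[0,t_0]$ (here $s\le 2$, so $H^2\hookrightarrow H^s$ and the $H^s$ norm of $v$ is controlled by Theorem \ref{mainTh4}, while $\|v_0\|_{H^s}\le\|\eta_0\|_{H^s}$), the worst term is the quadratic $\tau(\partial_x)(vu)$, bounded by $C\|v\|_{H^2}\|u\|_{H^2}\lesssim N^{2-s}$. Hence $\sup_{[0,t_0]}\|F(u+v)-F(u)\|_{H^2}\lesssim N^{2-s}$, and with $t_0\sim N^{-2(2-s)}$,
\[
\|h(t_0)\|_{H^2}\lesssim N^{-2(2-s)}\cdot N^{2-s} = N^{-(2-s)} = N^{s-2}.
\]

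This is one power short of the claimed $N^{s-3}$, so the real content — and the expected main obstacle — is to gain an extra factor of $N^{-1}$. This gain must come from the smoothing in the Fourier multipliers $\tau(\partial_x)$ and $\psi(\partial_x)$: both $\tau(\xi)$ and $\psi(\xi)$ decay like $|\xi|^{-1}$ (indeed $\psi(\xi)=\xi/\varphi(\xi)$ and $\tau(\xi)$ is $O(|\xi|^{-1})$ for large $\xi$, cf.\ \eqref{phi-D1}), so when measuring $F(u+v)-F(u)$ in $H^2$ one should really measure the argument in $H^3$ and let the multiplier absorb one derivative — or, for the terms involving $v$ but not $u_x$, exploit that $v$ (and $h$) have the rough high-frequency profile \eqref{32} giving a genuine $N^{-1}$ gain relative to the $H^2$-subcritical scaling. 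Concretely, I would redo the term-by-term estimates in $H^3$ for the arguments of $\tau(\partial_x),\psi(\partial_x)$ (using that $H^2$ is an algebra and $H^2\hookrightarrow H^1$ handles the lower-order pieces), pick up $\|v\|_{H^2}\|u\|_{H^2}\lesssim N^{2-s}$ from the dominant term, and combine with the $|\xi|^{-1}$ decay of the multiplier acting on the $H^3\to H^2$ scale to net $N^{2-s}\cdot N^{-1}=N^{1-s}$; then $\|h(t_0)\|_{H^2}\lesssim t_0\, N^{1-s}\sim N^{-2(2-s)}N^{1-s}=N^{s-3}$, as desired. Finally, $h\in C([0,t_0],H^2)$ follows from the same estimates applied to differences $h(t)-h(t')$ together with strong continuity of $S(\cdot)$ on $H^2$, which is routine once the multilinear bounds are in place.
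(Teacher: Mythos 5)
Your first estimate, $\|u(t_0)\|_{H^2}\lesssim N^{2-s}$, is fine and close to the paper's (which simply invokes the conserved energy $E(u(t))=E(u_0)\sim\|u_0\|_{H^2}^2$ rather than Remark \ref{rm2.1}; both work). The second estimate, however, has a genuine gap, and your own diagnosis of where the missing factor of $N^{-1}$ should come from points in the wrong direction. First, your preliminary computation already rests on a false step: you write $\|v(t)\|_{H^2}\le\|v(t)\|_{H^s}$ ``since $H^2\hookrightarrow H^s$,'' but that embedding gives the reverse inequality, and in fact $v$ is \emph{not} in $H^2$ at all when $s<2$ --- the bound (\ref{32}) on $v_0$ holds only for $\rho\le s$, since $v_0$ carries all the high frequencies of $\eta_0$ with only $H^s$ decay. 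So one cannot estimate $F(u+v)-F(u)$ in $H^2$ by the multilinear estimates directly: every factor of $v$ would be uncontrolled. Second, the proposed repair --- measuring the arguments of $\tau(\partial_x),\psi(\partial_x)$ in $H^3$ and letting the $|\xi|^{-1}$ decay of the multiplier ``absorb one derivative'' --- goes the wrong way: it would require $\|u\|_{H^3}$ and $\|v\|_{H^3}$, which are even less available, and multiplier decay converts into a power of $N$ only for functions localized at frequency $\sim N$, which $u(t)$ and $v(t)$ are not after nonlinear evolution.

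The mechanism in the paper is the opposite one: go \emph{down} to $H^1$, where both factors are controlled and $v$ is actually \emph{small}, namely $\|u(t)\|_{H^1}\lesssim 1$ and $\|v(t)\|_{H^1}\lesssim N^{1-s}$ (from (\ref{31}), (\ref{32}) and the local theory). Estimating $F(u+v)-F(u)$ in $H^1$ by Propositions \ref{P}--\ref{P2}, every term carries at least one factor of $v$, the dominant contribution being the term linear in $v$, of size $\|u\|_{H^1}\|v\|_{H^1}\lesssim N^{1-s}$; integrating over $[0,t_0]$ with $t_0\sim N^{-2(2-s)}$ gives $\|h(t_0)\|_{H^1}\lesssim N^{s-3}$. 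The $H^2$ norm of $h$ is then recovered not by estimating products in $H^2$ but by writing $\|h\|_{H^2}\sim\|h\|_{H^1}+\|\partial_x h\|_{H^1}$ and using the second halves of the propositions --- the bounds (\ref{xbilin-1}), (\ref{xtrilin-1}), (\ref{1Sharp1}) for $\partial_x\tau(\partial_x)$ and $\partial_x\psi(\partial_x)$ from $H^1\times H^1$ (or $H^1\times H^1\times H^1$) into $H^1$ --- so that the extra derivative on $h$ is absorbed by the multiplier and the right-hand side is again expressed purely in $H^1$ norms of $u$ and $v$. This is exactly the smoothing effect of the Duhamel term that makes $h$ two derivatives better than $v_0$; without routing the second derivative through these $\partial_x$-composed estimates, the claimed $N^{s-3}$ bound is not reachable.
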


\begin{proof}
	According to the energy conservation law (\ref{apriori4}), we have
	$$
	\|u(t_0)\|_{H^2} \sim \sqrt{E(u(t_0))} = \sqrt{E(u_0)} \sim \|u_0\|_{H^2} \lesssim N^{2 - s}.
	$$
	On the other hand, from equations (\ref{xeq6}) and (\ref{xeq7}), for \(1 \leq \delta \leq s\), we get
	\begin{equation}
		\begin{split}
			\|h(t_0)\|_{H^{\delta}} & = \left\|\int_0^{t_0} S(-t')\left( F(u + v)-F(u)\right)(x, t') dt'\right\|_{H^{\delta}} \\
			& \leq \int_0^{t_0}\left\|S(-t')\left( F(u + v)-F(u)\right)(x, t')\right\|_{H^{\delta}} dt' \\
			& \leq \int_0^{t_0}\left(\left\|\tau (\partial_x)(v^2 + 2vu)\right\|_{H^{\delta}}+\frac{1}{8}\left\|\psi(\partial_x)(3u^2v + 3uv^2 + v^3)\right\|_{H^{\delta}}\right.\\
			& \quad \left.+\frac{7}{48}\left\|\psi(\partial_x)(2u_xv_x + v_x^2)\right\|_{H^{\delta}}\right) dt'.
		\end{split}
	\end{equation}
	Then, by Propositions \ref{P}, \ref{P1}, and \ref{P2}, we obtain
	\begin{equation}
		\begin{split}
			\|h(t_0)\|_{H^{\delta}} 
			& \lesssim \int_0^{t_0}\left(\|v\|_{H^{\delta}}^2+\|v\|_{H^{\delta}}\|u\|_{H^{\delta}}+\|u\|_{H^{\delta}}^2\|v\|_{H^{\delta}}\right.\\
			& \quad \left.+\|u\|_{H^{\delta}}\|v\|_{H^{\delta}}^2+\|v\|_{H^{\delta}}^3\right) dt'.
		\end{split}
	\end{equation}
	From the local theory and inequalities (\ref{31}) and (\ref{32}), we know that \(\|v\|_{H^{\delta}} \lesssim N^{\delta - s}\) and \(\|u\|_{H^{\delta}} \lesssim c\).
	
	So, when \(\delta = 1\) and \(s \geq 1\), we have
	\begin{equation}\label{eq17}
		\begin{split}
			\|h(t_0)\|_{H^{1}} 
			& \lesssim \int_0^{t_0}\left(N^{2(1 - s)}+N^{(1 - s)}+N^{3(1 - s)}\right) dt' \\
			& \lesssim t_0\left(N^{2(1 - s)}+N^{(1 - s)}+N^{3(1 - s)}\right)\\
			& \lesssim N^{-2(2 - s)} \left(N^{2(1 - s)}+N^{(1 - s)}+N^{3(1 - s)}\right)\\
			& \lesssim N^{s - 3}+N^{-2}+N^{-s - 1}\\
			& \lesssim N^{s - 3}.
		\end{split}
	\end{equation}
	
	In addition,
	\begin{equation}
		\begin{split}
			\|\partial_x h(t_0)\|_{H^{1}} & \leq \int_0^{t_0}\left(\left\|\partial_x \tau (\partial_x)(v^2 + 2vu)\right\|_{H^{1}}\right.\\
			& \quad +\frac{1}{8}\left\|\partial_x\psi(\partial_x)(3u^2v + 3uv^2 + v^3)\right\|_{H^{1}}\\
			& \quad \left.+\frac{7}{48}\left\|\partial_x\psi(\partial_x)(2u_xv_x + v_x^2)\right\|_{H^{1}}\right) dt'.
		\end{split}
	\end{equation}
	
	Using Propositions \ref{P}, \ref{P1}, and \ref{P2}, we can get
	\begin{equation}
		\begin{split}
			\|\partial_x h(t_0)\|_{H^{1}} & \lesssim \int_0^{t_0}\left(\|v\|_{H^{1}}^2+\|v\|_{H^{1}}\|u\|_{H^{1}}+\|u\|_{H^{1}}^2\|v\|_{H^{1}}\right.\\
			& \quad \left.+\|v\|_{H^{1}}^2\|u\|_{H^{1}}+\|v\|_{H^{1}}^3\right) dt'.
		\end{split}
	\end{equation}
	
	Similarly, following the proof process in equation (\ref{eq17}), we have
	\begin{equation}\label{eq18}
		\begin{split}
			\|\partial_x h(t_0)\|_{H^{1}} 
			& \lesssim N^{s - 3}.
		\end{split}
	\end{equation}
	
	Combining equations (\ref{eq17}) and (\ref{eq18}), we get
	\begin{equation}
		\begin{split}
			\|h(t_0)\|_{H^{2}} & \sim \|h(t_0)\|_{H^{1}}+\|\partial_x h(t_0)\|_{H^{1}} \lesssim N^{s - 3}.
		\end{split}
	\end{equation}
	This completes the proof of the lemma.
\end{proof}

\begin{theorem}[Global Well - Posedness]\label{mainTh3.1}
	Assume \(\gamma_1, \delta_1 > 0\). Take \(1\leq s < 2\) and \(\gamma=\frac{7}{48}\). Then for any given \(T > 0\), the solution of the Cauchy problem (\ref{eq1.6}) given by Theorem \ref{mainTh1} can be extended to \([0, T]\). Therefore, in this case, the Cauchy problem (\ref{eq1.6}) is globally well - posed. Moreover, if \(\eta_0 \in H^s\), then
	\begin{equation}\label{Crecinorm1}
		\eta(t) - S(t)\eta_0 \in H^2, \quad \forall t\in [0, T]
	\end{equation}
	and
	\begin{equation}\label{Crecinorm2}
		\sup_{t \in [0,T]}\|\eta(t) - S(t)\eta_0\|_{H^2} \lesssim (1 + T)^{2 - s}.
	\end{equation}
\end{theorem}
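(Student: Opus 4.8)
We only sketch the argument, which is a high--low frequency splitting iterated on short time windows, in the spirit of Bourgain's globalization scheme; the ingredients are the $H^2$-theory of Theorems~\ref{mainTh2} and~\ref{mainTh4} together with Lemma~\ref{lemah1}. Fix $T>0$ and a large cutoff $N=N(T,\|\eta_0\|_{H^s})$, to be pinned down at the end (it will be $\sim 1+T$). Decompose $\eta_0=u_0+v_0$ with $\widehat{u_0}=\widehat{\eta_0}\chi_{\{|\xi|\le N\}}$, so that by \eqref{31}--\eqref{32} we have $\|u_0\|_{H^2}\lesssim N^{2-s}$ and $\|u_0\|_{H^\delta}\lesssim\|\eta_0\|_{H^s}$ for $\delta\le s$, while $v_0$ is supported in $\{|\xi|>N\}$ with $\|v_0\|_{H^s}\le\|\eta_0\|_{H^s}$ and $\|v_0\|_{H^\delta}\lesssim N^{\delta-s}$ for $\delta\le s$. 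Let $u$ solve \eqref{xeq1}: since $\gamma=\tfrac{7}{48}$ and $u_0\in H^2$, Theorem~\ref{mainTh2} makes $u$ global, and --- this is the only place $\gamma=\tfrac{7}{48}$ is used --- the conserved functional \eqref{apriori4}, being equivalent to $\|\cdot\|_{H^2}^2$ with fixed constants, forces $\|u(t)\|_{H^2}\sim\|u_0\|_{H^2}\lesssim N^{2-s}$ for all $t\ge0$.

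The core is an induction on time windows of length $t_0\sim N^{-2(2-s)}$, carrying the invariant that at $t=kt_0$ one has $\eta(kt_0)=u_k+v_k$ with $v_k=S(kt_0)v_0$ --- hence $v_k$ is still localized in $\{|\xi|>N\}$ and $\|v_k\|_{H^\delta}=\|v_0\|_{H^\delta}$, since $S(t)$ is a unitary Fourier multiplier on every $H^\delta$ --- and $u_k\in H^2$ with $\|u_k\|_{H^2}\lesssim N^{2-s}$ (and the lower norm $\|u_k\|_{H^1}$ also under control). Given the invariant at step $k$, solve \eqref{xeq1} with datum $u_k$ (global, Theorem~\ref{mainTh2}) to get $U_k$, and \eqref{xeq2} with datum $v_k$ and coefficient $U_k$ (Theorem~\ref{mainTh4}, existence window $\gtrsim N^{-2(2-s)}\ge t_0$); by uniqueness (Theorem~\ref{mainTh1}), $\eta(kt_0+\tau)=U_k(\tau)+V_k(\tau)$ for $\tau\in[0,t_0]$, with $V_k(\tau)=S(kt_0+\tau)v_0+h_k(\tau)$, $h_k$ the Duhamel term of \eqref{xeq6}. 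Lemma~\ref{lemah1} --- whose implied constants depend only on $s,\gamma_1,\delta_1,\|\eta_0\|_{H^s}$ and are thus uniform in $k$ by the invariant --- gives $h_k(\tau)\in H^2$ with $\|h_k(\tau)\|_{H^2}\lesssim N^{s-3}$. One re-decomposes $\eta((k+1)t_0)=u_{k+1}+v_{k+1}$ with $u_{k+1}:=U_k(t_0)+h_k(t_0)\in H^2$ and $v_{k+1}:=S((k+1)t_0)v_0$. Measuring the smooth part in the exactly-conserved norm $\|\cdot\|_E:=\sqrt{2E(\cdot)}$ avoids any compounding constant:
\[
\|u_{k+1}\|_E\le\|U_k(t_0)\|_E+\|h_k(t_0)\|_E=\|u_k\|_E+\|h_k(t_0)\|_E,
\]
so $\|u_k\|_E\le\|u_0\|_E+kCN^{s-3}$, while $\|v_k\|_{H^s}=\|v_0\|_{H^s}$ never grows; propagation of the $\|u_k\|_{H^1}$-control comes from an $H^1$ energy inequality for \eqref{xeq1} over the short window together with $\|h_k(t_0)\|_{H^1}\lesssim N^{s-3}$.

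Covering $[0,T]$ needs $M\sim T/t_0\sim TN^{2(2-s)}$ windows, and the invariant survives through $k=M$ as soon as $MN^{s-3}\lesssim N^{2-s}$; since $MN^{s-3}\sim TN^{2(2-s)+s-3}=TN^{1-s}\le T$, it suffices to take $N\gtrsim T$, i.e. $N\sim 1+T$. With this choice $\eta=u_k+v_k$ is defined on $[0,T]$, solves \eqref{eq1.6}, and by Theorem~\ref{mainTh1} coincides with --- hence extends --- the local solution; continuous dependence on $\eta_0$ over $[0,T]$ follows by composing the continuous dependence of each of the finitely many steps. For the last two assertions, on the $k$-th window ($t=kt_0+\tau$) the identities above give
\[
\eta(t)-S(t)\eta_0 = U_k(\tau)-S(t)u_0 + h_k(\tau),
\]
and since $\|U_k(\tau)\|_{H^2}\sim\|u_k\|_{H^2}\lesssim N^{2-s}$, $\|S(t)u_0\|_{H^2}=\|u_0\|_{H^2}\lesssim N^{2-s}$ and $\|h_k(\tau)\|_{H^2}\lesssim N^{s-3}\le1$, we get $\eta(t)-S(t)\eta_0\in H^2$ with $\sup_{t\in[0,T]}\|\eta(t)-S(t)\eta_0\|_{H^2}\lesssim N^{2-s}\lesssim(1+T)^{2-s}$, which is \eqref{Crecinorm1}--\eqref{Crecinorm2}.

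The main obstacle is exactly this accounting across the $\sim TN^{2(2-s)}$ iterations: one must keep the constants in Lemma~\ref{lemah1} uniform, which forces propagation of the \emph{entire} invariant (the conserved $H^2$-size and the lower-order size of $u_k$, and the frequency localization and size of $v_k$, using Propositions~\ref{P}--\ref{P2} at every step), and $N$ must be calibrated against $T$ so that the summed corrections stay $O(N^{2-s})$ and reproduce the sharp power $2-s$; the endpoint $s=1$ is the tight case and relies on the full strength of the bound $\|h_k(t_0)\|_{H^2}\lesssim N^{s-3}$.
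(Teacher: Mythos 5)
Your proposal is correct and follows essentially the same route as the paper: the same high--low frequency decomposition $\eta_0=u_0+v_0$, the same pair of evolution problems (\ref{xeq1})--(\ref{xeq2}), the same iteration on windows of length $t_0\sim N^{-2(2-s)}$ with the re-decomposition $u_{k+1}=U_k(t_0)+h_k(t_0)$, $v_{k+1}=S((k+1)t_0)v_0$, the same use of Lemma \ref{lemah1} to control the energy increment, the same calibration $N\sim T$, and the same final identity $\eta(t)-S(t)\eta_0=u(\tau)-S(t)u_0+h(\tau)$. The only cosmetic difference is that you track the increment via the triangle inequality for $\sqrt{E}$ where the paper expands $E(u(t_0)+h(t_0))$ and bounds the cross terms directly; the two bookkeepings are equivalent.
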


\begin{proof}
	Let \(\eta_0\in H^s\), \(1\leq s < 2\) and \(T>0\) be arbitrarily given. As mentioned above, decompose the initial value \(\eta_0\) as \(\eta_0 = u_0 + v_0\) such that \(u_0\) and \(v_0\) satisfy equations (\ref{31}) and (\ref{32}) respectively.
	
	Evolve \(u_0\) and \(v_0\) through the Cauchy problems (\ref{xeq1}) and (\ref{xeq2}) respectively. Using Theorems \ref{mainTh1} and \ref{mainTh4}, we obtain solutions \(u\) and \(v\) respectively, such that within the common existence time interval of \(u\) and \(v\), \(\eta = u + v\) is the solution of the Cauchy problem (\ref{eq1.6}).
	
	From (\ref{apriori4}) and (\ref{31}), we have
	\begin{equation}\label{xeq4}
		E(u(t)) = E(u_0) \sim \|u_0\|_{H^2}^2 \lesssim N^{2(2 - s)}
	\end{equation}
	And the local existence time estimate in \(H^2\) given by Theorem \ref{mainTh1} is
	\begin{equation}\label{xeq5}
		\begin{split}
			T_u&=\frac{c_s}{\|u_0\|_{H^2}(1+\|u_0\|_{H^2})}\\
			&\geq \frac{c_s}{N^{(2 - s)}(1 + N^{(2 - s)})} \\
			&\geq \frac{c_s}{N^{2(2 - s)}}=:t_0.
		\end{split}
	\end{equation}
	
	Note that \((\|v_0\|_{H^s}+\|u_0\|_{H^s})(1+\|v_0\|_{H^s}+\|u_0\|_{H^s})\lesssim \|\eta_0\|_{H^s}(1+\|\eta_0\|_{H^s})=C_s\). So, we have
	\begin{equation}\label{uxeq5}
		T_v =\frac{c_s}{(\|v_0\|_{H^s}+\|u_0\|_{H^s})(1+\|v_0\|_{H^s}+\|u_0\|_{H^s})} \geq \frac{c_s}{C_s} \geq t_0.
	\end{equation}
	
	Inequalities (\ref{xeq5}) and (\ref{uxeq5}) imply that the solutions \(u\) and \(v\) are both defined on the same time interval \([0,t_0]\).
	
	Inequality (\ref{xeq4}) implies
	\begin{equation}\label{0xeq4}
		t_0 \lesssim \frac{1}{E(u_0)}.
	\end{equation}
	
	According to equation (\ref{xeq6}), the local solution \(v \in H^s\) can be expressed as
	\begin{equation}\label{xeq6 - 1}
		v(x,t) = S(t)v_0 + h(x,t).
	\end{equation}
	Therefore, within the time \(t_0 \sim N^{-2(2 - s)}\), the solution \(\eta\) can be written as
	\begin{equation}\label{00xeq9}
		\eta(t)=u(t)+ v(t) = u(t)+ S(t)v_0 + h(t), \quad t\in [0,t_0].
	\end{equation}
	
	At \(t = t_0\), we have
	\begin{equation}\label{xeq9}
		\eta(t_0)=u(t_0)+ S(t_0)v_0 + h(t_0)=:u_1 + v_1,
	\end{equation}
	where
	\begin{equation}\label{decomp1}
		u_1=u(t_0) + h(t_0) \quad \text{and} \quad v_1 = S(t_0)v_0.
	\end{equation}
	Within the time \(t_0\), consider the new initial values \(u_1\) and \(v_1\) and evolve them according to the Cauchy problems (\ref{xeq1}) and (\ref{xeq2}) respectively, then continue to iterate this process. In each iteration, consider the decomposition of the initial value as in (\ref{decomp1}). Thus, \(v_1, \dots, v_k = S(k t_0)v_0\) have the same \(H^s\) norm as \(v_0\), i.e., \(\|v_k\|_{H^s} = \|v_0\|_{H^s}\). We expect \(u_1, \dots, u_k\) to have the same properties as \(u_0\) to ensure the same existence time interval \([0,t_0]\) in each iteration, and then connect them to cover the entire time interval \([0,T]\), thereby extending the solutions of the systems (\ref{xeq1}) and (\ref{xeq2}). This fact can be proved by induction. Here, we only prove the case \(k = 1\), and similar arguments are valid for the general case.
	
	To achieve the above goal, using the energy conservation (\ref{apriori4}), we have
	\begin{equation}\label{xeq10}
		\begin{split}
			E(u_1)&=E(u(t_0) + h(t_0))\\
			&=E(u(t_0))+ \big[E(u(t_0) + h(t_0))-E(u(t_0))\big]\\
			&=:E(u(t_0))+\mathcal{X}.
		\end{split}
	\end{equation}
	where
	\begin{equation}\label{xeq11}
		\begin{split}
			\mathcal{X}&=2 \int_{\mathbb{R}}u(t_0)h(t_0) dx+\int_{\mathbb{R}}h(t_0)^2dx+2\gamma_1 \int_{\mathbb{R}}u_x(t_0)h_x(t_0) dx\\
			&\quad+\gamma_1\int_{\mathbb{R}}h_x(t_0)^2dx+2 \delta_1\int_{\mathbb{R}}u_{xx}(t_0)h_{xx}(t_0) dx+\delta_1\int_{\mathbb{R}}h_{xx}(t_0)^2dx\\
			&\leq 2\|u(t_0)\|_{L^2}\|h(t_0)\|_{L^2}+\|h(t_0)\|_{L^2}^2\\
			&\quad+ \gamma_1 (2 \|u_x(t_0)\|_{L^2}\|h_x(t_0)\|_{L^2}+\|h_x(t_0)\|_{L^2}^2)\\
			&\quad+\delta_1(2\|u_{xx}(t_0)\|_{L^2}\|h_{xx}(t_0)\|_{L^2}+\|h_{xx}(t_0)\|_{L^2}^2).
		\end{split}
	\end{equation}
	
	Using Lemma \ref{lemah1}, from inequality (\ref{xeq11}) we get
	\begin{equation}\label{xeq12}
		\begin{split}
			\mathcal{X} &\lesssim   N^{2 - s}N^{s - 3}+N^{2(s - 3)}+ (\gamma_1+\delta_1) (N^{2 - s}N^{s - 3}+N^{2(s - 3)})\\
			& \lesssim   N^{-1}.
		\end{split}
	\end{equation}
	Combining equations (\ref{xeq10}), (\ref{xeq11}) and (\ref{xeq12}), we obtain
	\begin{equation}\label{xeq13}
		\begin{split}
			E(u_1) & \leq E(u(t_0))+cN^{-1}.
		\end{split}
	\end{equation}
	To cover the given time interval \([0, T]\), the number of iterations required is
	\(\frac{T}{t_0}\sim TN^{2(2 - s)}\).
	Therefore, according to equation (\ref{xeq13}), to achieve this, we need
	\(
	TN^{2(2 - s)} N^{-1}  \lesssim N^{2(2 - s)},
	\)
	which holds when \(1\leq s < 2\) and \(N = N(T) = T\).
	
	From the above discussion, in each iteration, we have
	\(
	\|u_k\|_{H^2}^2 \sim E(u_k) \lesssim N^{2(2 - s)},\text{ uniformly holds}~~\text{and}~~\|v_k\|_{H^2}=\|v_0\|_{H^2}.
	\)
	
	Finally, let \(t \in [0,T]\), then there exists an integer \(k \geq0\) such that \(t = k t_0+\tau\), where \(\tau \in [0,t_0]\). In the \(k\) - th iteration (see equation (\ref{00xeq9})), we have
	\begin{equation}\label{xeq13 - 1}
		\begin{split}
			\eta(t) &=u(\tau)+ S(\tau)v_k + h(\tau) \\
			&=u(\tau)+ S(\tau)S(kt_0)v_0 + h(\tau) \\
			&= S(t)\eta_0+ u(\tau)- S(t)u_0+ h(\tau).
		\end{split}
	\end{equation}
	
	Therefore
	\begin{equation}\label{xeq13 - 2}
		\begin{split}
			\eta(t) - S(t)\eta_0= u(\tau)- S(t)u_0+ h(\tau).
		\end{split}
	\end{equation}
	This completes the proof of Theorem \ref{mainTh3.1}.
\end{proof}

\section{Conclusion}
In this paper, we systematically studied the derivation process and well - posedness of a class of high - order water - wave equations, namely the fifth - order Benjamin - Bona - Mahony (BBM) equation. By making small - parameter corrections to the \(abcd -\)system, followed by approximation and estimation, we finally derived the fifth - order BBM equation. This type of equation formally contains high - order dispersion terms and non - linear terms, enabling a more accurate description of the propagation characteristics of complex water waves on long - time scales.

In the study of well - posedness, for the fifth - order BBM equation, by combining multilinear estimates, the contraction mapping principle, and the energy - conservation method, we proved its local well - posedness in the Sobolev space \(H^s(\mathbb{R})\) with \(s\geq1\). Under the condition that the parameter \(\gamma = 7/48\), we further obtained the global well - posedness result (\(s\geq1\)) through a priori estimates.

The main innovation of this paper lies in the derivation of two types of high - order water - wave equations through the approximation and estimation of the \(abcd -\)system, and the exploration of the well - posedness of high - order non - linear terms using modern harmonic analysis techniques. This research achievement not only provides a solid theoretical foundation for long - time - scale modeling in water - wave dynamics but also lays an important foundation for the application and generalization of high - order equations in the future.

\end{document}